\documentclass[11pt,english]{amsart}
\usepackage{amsfonts, amssymb, amsmath, amsthm, latexsym, enumerate, epsfig, color, mathrsfs, fancyhdr, pdfsync, eurosym, endnotes, stmaryrd}
\usepackage[all]{xy}

\usepackage[english]{babel}

\usepackage{raccourcisanglais}

\usepackage[mac]{inputenc}
\usepackage[T1]{fontenc}

\usepackage{hyperref}

\setlength{\textwidth}{13.3cm}                
\setlength{\textheight}{21.5cm}              
\setlength{\topmargin}{0.4cm}               
\setlength{\headheight}{0.4cm}             
\setlength{\headsep}{0.75cm}
\setlength{\oddsidemargin}{0.7cm}
\setlength{\evensidemargin}{0.7cm}

\theoremstyle{plain}
\newtheorem{theorem}{Theorem}[section]
\newtheorem{lemma}[theorem]{Lemma}

\newtheorem{corollary}[theorem]{Corollary}
\newtheorem{definition}[theorem]{Definition}

\theoremstyle{remark}
\newtheorem{remark}[theorem]{Remark}

\begin{document}
\setlength{\baselineskip}{0.55cm}	

\title[Polynomial approximation of Berkovich spaces]{Polynomial approximation of Berkovich spaces\\ and definable types}
\author{J\'er\^ome Poineau}
\address{Institut de recherche math\'ematique avanc\'ee, 7, rue Ren\'e Descartes, 67084 Strasbourg, France}
\email{jerome.poineau@math.unistra.fr}
\thanks{The research for this article was partially supported by the ANR project Berko.}

\date{\today}

\subjclass[2010]{14G22, 12J25, 14P10, 03C98}
\keywords{Berkovich spaces, non-Archimedean analytic geometry, maximally complete fields, spherically complete fields, semi-algebraic sets, definable types}

\begin{abstract}
We investigate affine Berkovich spaces over maximally complete fields and prove that they may be approximated by simpler spaces when the only functions we need to evaluate are polynomials of bounded degree. We derive applications to semi-algebraic sets and recover a result of E.~Hrushovski and F.~Loeser which claims that points of Berkovich spaces give rise to definable types (a model-theoretic notion of tameness). 
\end{abstract}

\maketitle

\section*{Introduction}

In the late eighties, V.~Berkovich came up with a new definition of $p$-adic analytic spaces, and more generally analytic spaces over a complete valued field~$k$. It is remarkable that, in this setting, the affine line~$\Ak$ over~$k$ always contains many points in addition to those belonging to~$k$. This feature, which may seems disturbing at first sight, leads to numerous interesting consequences. For instance, it lets the space~$\Ak$ carry a nice topology, where properties such as path-connectedness and local compactness are satisfied, regarless of the fact that the field~$k$ may be totally disconnected and not locally compact.

On the other hand, due precisely to this abundance of extra points, Berkovich spaces are often difficult to describe explicitely. A noteworthy exception is that of the analytic line~$\Ak$ over an algebraically closed field~$k$. In this situation, V.~Berkovich managed to give a complete classification of the points, which are gathered into four types. The simplest case is that of maximally complete fields, where only three types remain, which all correspond to avatars of generic points of discs (the type being determined by a condition on the radius). 

In this paper, we are interested in the case of a general affine space~$\E{n}{k}$ over a maximally complete field (not necessarily algebraically closed). Let us say at once that we will not give a description which is as explicit as the one provided by V.~Berkovich for the line (we believe such a task to be hopeless in general) but a succession of ``approximations''. More precisely, if we fix a point~$x$ in~$\E{n}{k}$ and a degree~$d$ for polynomials in $n$~variables, we will show how to construct a simple space~$S_{x,d}$ (similar to the disc mentionned before in dimension~1, more complicated in higher dimension) which determines the value of any polynomial of degree at most~$d$ at the point~$x$.

As an application, in the last part of the paper, we prove semi-algebraicity results. We also explain how to recover, in a geometric fashion, the fact, observed by E.~Hrushovski and F.~Loeser, that points of affine Berkovich spaces give rise to definable types (a model-theoretic notion of tameness)\footnote{It is unfortunate that two notions of type appear in this paper. The model-theoretic notion is not to be confused with the one used by V.~Berkovich to classify the points of the line.}.

\medskip

\textbf{Acknowledgements}

In 2010, I was given the opportunity to attend the summer conference of the MRC program on model theory of fields in Snowbird Resort, Utah. I would like to thank all the participants who explained the basics of model theory to me, with much patience and insight.

\section{Approximation in dimension 1}

Let~$(k,|.|)$ be a field endowed with a (possibly trivial) non-Archimedean absolute value, with respect to which it is complete. We will have a special interest in fields~$k$ that are maximally complete. Let us recall that this condition means that the field~$k$ has no non-trivial immediate extension, \textit{i.e.} with the same residue field and the same value group. By a theorem of I.~Kaplansly (see~\cite{maximalfields}), it is equivalent to the spherical completeness of the field: a family of embedded discs cannot have an empty intersection. This last condition is the only one we shall use in this paper. It is easy to check that trivially valued fields and complete discrete valuation fields are maximally complete. For a more complicated example, consider the extension~$F(\!(t^\Q)\!)$ of a field~$F$ given by series $\sum_{q\in \Q} a_{q}\, t^q$ with a well-ordered support. For more on this subject (such as a mixed-characteristic analogue of the previous construction), we advise the reader to have a look at B.~Poonen's nice paper~\cite{Poonenmax}.

\bigskip

In this first section, we will be interested in the affine analytic line~$\E{1}{k}$ over the field~$k$, in the sense of V.~Berkovich. We refer to~\cite{rouge} for basic results concerning Berkovich spaces. Actually, in this paper, we will not need much beyond the first chapter of the book. 

Let us choose some notations. We will denote~$T$ the variable on~$\E{1}{k}$. For any integer~$d$, we denote~$k[T]_{\le d}$ the $k$-vector space of polynomials of degree at most~$d$.

Let~$P$ be an irreducible polynomial. We denote~$\eta_{P,0}$ the unique point in~$\Ak$ such that $P(\eta_{P,0})=0$. Let~$r>0$. The Shilov boundary of the affinoid domain $\{|P|\le r\}$, which coincides with its topological boundary, contains a unique point, that we denote~$\eta_{P,r}$. From a model-theoretic point of view, this is the generic type of the subset $\{|P|\le r\}$.

Let us denote~$D_{0}=\{\eta_{T,1}\}$ and for every integer~$d\ge 1$,
\[D_{d} = \{\eta_{P,r},\ P\in k[T]_{\le d}, r\ge 0\}.\]

\begin{remark}\label{rem:unique}
For any points $x \ne y \in D_{d}$, there exists a polynomial $P\in k[T]_{\le d}$ such that $|P(x)| \ne |P(y)|$.
\end{remark}

Let us add a few words on V.~Berkovich's classifications of the points of~$\Ak$ (see~\cite{rouge}, \S~1.4). When the field~$k$ is algebraically closed and maximally complete, he proves that every point is of the form~$\eta_{T-a,r}$, with $a\in k$ and~$r\ge 0$. We would like to adapt this classification when the condition of being algebraically closed is removed. In this case, polynomials of degree bigger than~1 cannot be decomposed into polynomials of degree~1 and they have to enter the picture too. Actually, we believe that V.~Berkovich's result should, in general, be thought of as a kind of approximation of~$\Ak$, where only polynomials of degree~1 are considered. We observe that his construction may actually be generalized in any degree.

\begin{theorem}\label{thm:dim1}
Assume that~$k$ is maximally complete. Let~$x$ be a point of~$\E{1}{k}$. For any integer $d\ge 0$, there exists a unique point~$x_{d}$ in~$D_{d}$ with the following property:  
\[\forall P\in k[T]_{\le d},\ |P(x_{d})|=|P(x)|.\]

In addition, $x_{d} \ge x$ and
\[x_{d} = \inf(\{y\in D_{d}\, |\, y\ge x\}).\]
\end{theorem}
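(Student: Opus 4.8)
My plan is to build $x_{d}$ by hand as an infimum and then to read off every assertion of the statement from that single construction; the only nontrivial ingredient will be the spherical completeness of~$k$. Throughout I use that $y\ge x$ means $|f(x)|\le|f(y)|$ for all $f$, and I record two elementary facts about the special points: for $P\in k[T]_{\le d}$ irreducible and $s\ge 0$ one has $|f(\eta_{P,s})|=\sup_{\{|P|\le s\}}|f|$ (the Shilov point computes the sup norm), and consequently $\eta_{P,s}\ge x$ if and only if $|P(x)|\le s$, i.e. if and only if $x\in\{|P|\le s\}$. In particular each point $\eta_{P,|P(x)|}$ dominates~$x$, a fact I will use twice.

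Next I would introduce $B=\{y\in D_{d}\,|\,y\ge x\}$. This set is nonempty, since $\eta_{T,r}\ge x$ for every $r\ge |T(x)|$, and it is totally ordered: two affinoids of the form $\{|Q|\le s\}$ both containing~$x$ are nested (non-Archimedean geometry, or equivalently the tree structure of~$\Ak$, under which the points above~$x$ form a single branch), and nestedness of the affinoids translates into comparability of their generic points. I then use spherical completeness: a cofinal decreasing sequence in~$B$ corresponds to a decreasing family of nested affinoids $\{|Q|\le s\}$ all containing~$x$, whose intersection is therefore nonempty, and the generic point of that intersection is a greatest lower bound. This produces a point $x_{d}=\inf B$, and since every element of~$B$ dominates~$x$, the point~$x$ is a lower bound of~$B$, so $x_{d}\ge x$.

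The crux, and the step I expect to fight with, is showing $x_{d}\in D_{d}$: a priori the infimum of points of~$D_{d}$ could require a polynomial of degree~$>d$. If the infimum is attained in~$B$ there is nothing to prove. Otherwise I would choose a strictly decreasing minimizing sequence $\eta_{Q_{n},|Q_{n}(x)|}$, pass to a subsequence along which the $Q_{n}$ are monic of a fixed degree $e\le d$, and argue that the coefficient sequences of the~$Q_{n}$ form nested families in~$k$, so that spherical completeness forces $Q_{n}\to Q_{\infty}$ with $\deg Q_{\infty}=e$. By continuity $|Q_{n}(x)|\to|Q_{\infty}(x)|$, whence $x_{d}=\eta_{Q_{\infty},|Q_{\infty}(x)|}$; replacing $Q_{\infty}$ by the relevant irreducible factor if it is reducible keeps the degree~$\le d$, so $x_{d}\in D_{d}$ in every case. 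Controlling this limiting polynomial is exactly where maximal completeness does the work and is the main obstacle.

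Granting $x_{d}\in D_{d}$, the agreement property is formal. Since $x_{d}\ge x$ we have $|P(x_{d})|\ge|P(x)|$ for all~$P$; and for $P\in k[T]_{\le d}$ irreducible the point $\eta_{P,|P(x)|}$ lies in~$B$, so $x_{d}=\inf B\le\eta_{P,|P(x)|}$ gives $|P(x_{d})|\le|P(\eta_{P,|P(x)|})|=|P(x)|$, hence equality; the general case follows by multiplicativity after factoring~$P$ into irreducibles of degree~$\le d$. Uniqueness is then immediate: any two points of~$D_{d}$ realizing the values $|P(x)|$ for all $P\in k[T]_{\le d}$ agree on all such~$P$ and therefore coincide by Remark~\ref{rem:unique}. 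Finally, the identity $x_{d}=\inf(\{y\in D_{d}\,|\,y\ge x\})$ is nothing but the definition of~$x_{d}$.
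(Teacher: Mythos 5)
Your reduction is sound and you have correctly isolated the crux: once $x_{d}:=\inf B$ exists and lies in~$D_{d}$, uniqueness (via remark~\ref{rem:unique}), the inequality $x_{d}\ge x$, the agreement $|P(x_{d})|=|P(x)|$ (squeezing between $x$ and $\eta_{P,|P(x)|}$, then factoring and using multiplicativity), and the infimum identity all follow formally, essentially as in the first paragraph of the paper's proof. (A minor misattribution: the existence of $\inf B$ as a point of~$\E{1}{k}$ needs only the interval structure of the branch $\{y\ge x\}$ and compactness of nested affinoids, not spherical completeness; that is harmless.)

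The genuine gap is exactly where you predicted you would fight: the claim that ``the coefficient sequences of the $Q_{n}$ form nested families in~$k$'' is asserted, not proved, and it is the whole difficulty. For $e=1$ it holds because $\{|T-a_{n}|\le s_{n}\}$ literally is a ball in~$k$ and nesting of affinoids is nesting of balls $\overline{D}(a_{n},s_{n})$ --- this is Berkovich's classical argument for algebraically closed~$k$. For $2\le e\le d$, nesting of $\{|Q_{n+1}|\le s_{n+1}\}\subset\{|Q_{n}|\le s_{n}\}$ only bounds the sup-norm of $Q_{n}-Q_{n+1}$ on the smaller affinoid; converting that into bounds on individual coefficients requires comparing the sup-norm on a \emph{varying} affinoid with a fixed coefficientwise norm on $k[T]_{<e}$, and the comparison constants change with~$n$ with no uniform control, so no nested system of coefficient balls results. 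You also conflate spherical completeness with convergence: it produces a point in the intersection of nested balls, but the radii (bounded below by $r=\inf_{n}s_{n}$, which is typically positive in the interesting case) need not tend to~$0$, so $Q_{n}\not\to Q_{\infty}$ and ``by continuity $|Q_{n}(x)|\to|Q_{\infty}(x)|$'' fails; irreducibility of $Q_{\infty}$ is a third loose end, handled only by hand-waving. The paper's proof is designed to sidestep all of this: it inducts on~$d$ and applies spherical completeness not to coefficients but to the space of \emph{values} $k_{d}=\{P(x),\,P\in k[T]_{\le d}\}$, a finite-dimensional normed $k$-vector space (spherically complete by~\cite{BGR}, 2.4.4), using the balls $E_{P}=\overline{D}(P(T)(x),|(T^{d+1}-P(T))(x)|)$ whose nestedness is a one-line ultrametric check; the extremal polynomial $R_{0}=T^{d+1}-P_{0}$ realizes $r=\inf_{P}|(T^{d+1}-P)(x)|$, the identity $|R(x)|=\max(r,|(R-R_{0})(x_{d})|)$ does the bookkeeping, and irreducibility of~$R_{0}$ falls out of $|R_{0}(x)|<|R_{0}(x_{d})|$. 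To repair your limiting step you would essentially have to re-import this device, at which point the inductive construction is shorter than the infimum-chasing.
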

\begin{proof}
The uniqueness follows from remark~\ref{rem:unique}. Let us assume that we found $x_{d} = \eta_{Q,r}$ with the required properties and prove the second part of the statement. Since~$Q$ has degree at most~$d$, we have $|Q(x)|=|Q(\eta_{Q,r})|=r$. Hence~$x$ belongs to $\{|Q|\le r\}$ and $x\le \eta_{Q,r}=x_{d}$. Now let $y=\eta_{R,s} \in D_{d}$ such that $y\ge x$. We have $s = |R(y)|\ge |R(x)|=|R(x_{d})|$, hence $y\ge x_{d}$ and we are done.

Let us now prove the existence of~$x_{d}$ by induction on~$d$. If $d=0$, the result is obvious. 

Let~$d\ge 0$ be an integer and assume we found~$x_{d}$. If $|P(x_{d})|=|P(x)|$ for any $P\in k[T]_{\le d+1}$, we put $x_{d+1}=x_{d}$ and we are done.

Let us now assume that there exists $A\in k[T]_{\le d+1}$ such that $|A(x)| < |A(x_{d})|$. We may assume that~$A$ is monic of degree~$d+1$.

Let~$k_{d}$ denote the set $\{P(x), P \in k[T]_{\le d}\}$. It is a finite-dimensional normed $k$-vector space. Since~$k$ is spherically complete, $k_{d}$ is spherically complete too (\textit{cf.}~\cite{BGR}, 2.4.4).

For any polynomial~$P$ in $k[T]_{\le d}$, let us consider the ball 
\[E_{P} = \overline{D}(P(T)(x), |(T^{d+1}-P(T))(x)|)\] 
inside~$k_{d}$. Let $P,Q \in k[T]_{\le d}$. If $|(T^{d+1}-P(T))(x)| \le |(T^{d+1}-Q(T))(x)|$, then $|P(T)(x)-Q(T)(x)| \le |(T^{d+1}-Q(T))(x)|$, hence $P(T)(x)\in E_{Q}$ and $E_{P} \subset E_{Q}$.

Since~$k_{d}$ is spherically complete, there exists~$P_{0} \in k[T]_{\le d}$ such that~$P_{0}(T)(x)$ belongs to all the~$E_{P}$'s. Let $R_{0}(T) =  T^{d+1}-P_{0}(T)$ and
\[r= |(R_{0}(T))(x)|= \inf_{P\in k[T]_{\le d}} (|(T^{d+1}-P(T))(x)|).\]
Let~$R\in k[T]_{\le d+1}$ be a monic polynomial of degree~$d+1$. An easy computation shows that $|R(x)| = \max(r,|(R-R_{0})(x_{d})|)$.  

Using $|(A-R_{0})(x)| = |(A-R_{0})(x_{d})|$, $|A(x)| < |A(x_{d})|$ and $|R_{0}(x)| \le |A(x)|$, we find $|R_{0}(x)| < |R_{0}(x_{d})|$. In particular, $R_{0}$~is irreducible. We want to prove that~$\eta_{R_{0},r}$ satisfies the property of the statement. 

Since~$\eta_{R_{0},r}$ and~$x_{d}$ are at least~$x$, they are comparable. From the inequality $|R_{0}(x_{d})| > r$, we deduce that $x_{d} \ge \eta_{R_{0},r} \ge x$. Hence we have $x_{d} = (\eta_{R_{0},r})_{d}$. Now, for every monic polynomial~$R$ of degree~$d+1$, we have
\[|R(\eta_{R_{0},r})| \le \max(r,|(R-R_{0})(\eta_{R_{0},r})|) = \max(r,|(R-R_{0})(x_{d})|) = |R(x)|\]
and we are done.
\end{proof}

The theorem enables us to define a map $x\in \Ak \mapsto x_{d} \in D_{d}$. It sends the point~$x$ to the closest point~$x_{d}$ on the graph~$D_{d}$. Remark that, for any~$d'\ge d$, we have $(x_{d'})_{d} = x_{d}$.

\begin{corollary}
Assume that~$k$ is maximally complete. The map
\[ \Ak \to \varprojlim_{d\ge 0} D_{d}\]
is a bijection.
\end{corollary}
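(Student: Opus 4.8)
The plan is to make the inverse system explicit and then split into injectivity and surjectivity, the latter being the real content. The transition maps of $\varprojlim_{d} D_{d}$ are the approximation maps $D_{d'} \to D_{d}$, $y \mapsto y_{d}$ provided by Theorem~\ref{thm:dim1}, and the relation $(x_{d'})_{d} = x_{d}$ recorded just after that theorem shows both that these maps are compatible (so that the inverse system is genuine) and that $x \mapsto (x_{d})_{d\ge 0}$ indeed lands in the limit. So the map in question is well defined.

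For injectivity I would use that a point of~$\Ak$ is nothing but a multiplicative seminorm on~$k[T]$ extending $|.|$, hence is entirely determined by the values $|P(\cdot)|$ as~$P$ ranges over~$k[T]$. If $x_{d} = y_{d}$ for all~$d$, then for any~$P$, choosing $d \ge \deg P$ and invoking the defining property of the approximation gives $|P(x)| = |P(x_{d})| = |P(y_{d})| = |P(y)|$; as this holds for every~$P$, the two seminorms coincide and $x = y$.

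The heart of the matter is surjectivity. Given a compatible family $(\xi_{d})_{d\ge 0}$ with $\xi_{d} \in D_{d}$ and $(\xi_{d'})_{d} = \xi_{d}$ for $d' \ge d$, I would reconstruct a point of~$\Ak$ by setting $\|P\| := |P(\xi_{d})|$ for any $d \ge \deg P$. First one checks this is independent of the chosen~$d$: for $d' \ge d \ge \deg P$, the identity $\xi_{d} = (\xi_{d'})_{d}$ together with the defining property of the approximation map applied to the point~$\xi_{d'}$ yields $|P(\xi_{d})| = |P((\xi_{d'})_{d})| = |P(\xi_{d'})|$. Then I would verify that $\|.\|$ is a multiplicative seminorm on~$k[T]$ extending the absolute value on~$k$: the restriction to~$k$ is the degree-zero case, while for the ultrametric inequality and multiplicativity one fixes a single~$d$ large enough to bound the degrees of all the polynomials involved (for a product, $d = \deg P + \deg Q$, using that~$k[T]$ is a domain) and transfers the corresponding property from the genuine point $\xi_{d} \in \Ak$. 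This produces a point $x \in \Ak$ with $|P(x)| = |P(\xi_{d})|$ for every $P \in k[T]_{\le d}$, whence $\xi_{d} = x_{d}$ by the uniqueness clause of Theorem~\ref{thm:dim1} (which rests on Remark~\ref{rem:unique}). Thus $(\xi_{d})_{d\ge 0}$ is the image of~$x$.

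I expect the only genuine obstacle to be the bookkeeping in the surjectivity step: one must be careful that the pointwise-defined $\|.\|$ is well defined and that multiplicativity is checked with a uniform degree bound, since these are precisely the two places where the compatibility of the family and the multiplicativity of each individual~$\xi_{d}$ are actually used. Everything else is formal.
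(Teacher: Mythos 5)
Your proof is correct and follows exactly the route the paper intends: the corollary is stated there without proof as an immediate consequence of Theorem~\ref{thm:dim1} together with the compatibility $(x_{d'})_{d}=x_{d}$ recorded just after it, and your argument (injectivity from seminorms being determined by values on all polynomials, surjectivity by reconstructing a multiplicative seminorm from a compatible family with a uniform degree bound, then invoking the uniqueness clause) is precisely the natural filling-in of those details. No gaps; the degree bookkeeping you flag is handled correctly, including $d=\deg P+\deg Q$ for multiplicativity.
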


\begin{remark}
It is also possible to describe the union of the sets~$D_{d}$. In the case where~$k$ is maximally complete, it contains all the points except some of type~1: those associated to the absolute values
\[P(T) \in k[T] \mapsto |P(\alpha)| \in \R_{+},\]
where~$\alpha$ is an element of~$\hat{\bar{k}}$, the completion of an algebraic closure of the field~$k$, which is transcendental over~$k$.
\end{remark}

\section{Approximation in higher dimensions}

In this second part, we would like to extend the approximation result of the previous section to the case of a general affine analytic space~$\E{n}{k}$, with $n\in\N$.

We will denote $\bT=(T_{1},\dots,T_{n})$ the variables on~$\E{n}{k}$. We endow $\N^n$ with the following order relation: $(d_{1},\dots,d_{n}) < (d'_{1},\dots,d'_{n})$ if 
\[{\renewcommand{\arraystretch}{1.8}\begin{array}{ll}
& \disp \sum_{i=1}^n d_{i} < \sum_{i=1}^n d'_{i}\\
\textrm{or} & \disp \sum_{i=1}^n d_{i} = \sum_{i=1}^n d'_{i} \textrm{ and } \exists j\in\cn{0}{n-1}, \forall i\le j, d_{i}=d'_{i} \textrm{ and } d_{j+1} < d'_{j+1}.
\end{array}}\]

Let $\bd \in\N^n$. We denote~$k[\bT]_{\le \bd}$ the $k$-vector space of polynomials of degree at most~$\bd$. Let~$R\subset \R_{+}$. We denote~$\Ss_{\bd,R}$ (or~$\Ss_{\bd}$ when $R=\R_{+}$) the set of subsets of~$\E{n}{k}$ of the form
\[\bigcap_{1\le i\le r} \{|P_{i}| \le r_{i}\},\]
with $P_{i} \in k[\bT]_{\le \bd}$ and~$r_{i}\in R$. 
Let~$D_{\bd,R}$ be the union of the Shilov boundaries of elements of~$\Ss_{\bd,R}$.

If~$K$ is a non-empty compact subset of~$\E{n}{k}$, we let~$\|.\|_{K}$ denote the sup-norm on~$K$.

\begin{lemma} \label{lem:maxryhd}
Let~$K$ be a compact subset of~$\E{n}{k}$. Let~$\bd\in\N^n$ and assume that there exists a monic polynomial~$R_{0}$ of degree~$\bd$ such that, for any monic polynomial~$R$ of degree~$\bd$, we have $\|R_{0}\|_{K} \le \|R\|_{K}$.
Then, for any monic polynomial~$R$ of degree~$\bd$, we have
\[\|R\|_{K}= \max(\|R_{0}\|_{K}, \|R-R_{0}\|_{K}).\]
\end{lemma}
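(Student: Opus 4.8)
The plan is to exploit the fact that the sup-norm $\|.\|_{K}$ is non-Archimedean. At each point $x$ of $K$ the evaluation $f\mapsto|f(x)|$ is a multiplicative, hence ultrametric, seminorm, so that
\[\|f+g\|_{K} = \sup_{x\in K}|f(x)+g(x)| \le \sup_{x\in K}\max(|f(x)|,|g(x)|) \le \max(\|f\|_{K},\|g\|_{K}),\]
which shows that $\|.\|_{K}$ itself satisfies the ultrametric inequality. Applying this to the decomposition $R = R_{0} + (R-R_{0})$ gives immediately $\|R\|_{K}\le\max(\|R_{0}\|_{K},\|R-R_{0}\|_{K})$, one half of the claim.

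For the reverse inequality I would distinguish two cases according to the comparison between $\|R-R_{0}\|_{K}$ and $\|R_{0}\|_{K}$. If $\|R-R_{0}\|_{K} > \|R_{0}\|_{K}$, then the standard rule for ultrametric norms (a strict inequality between the norms of two summands forces the norm of the sum to equal the larger of the two) yields directly $\|R\|_{K}=\|R-R_{0}\|_{K}=\max(\|R_{0}\|_{K},\|R-R_{0}\|_{K})$, and no hypothesis on $R_{0}$ is needed. If instead $\|R-R_{0}\|_{K}\le\|R_{0}\|_{K}$, the ultrametric inequality only gives $\|R\|_{K}\le\|R_{0}\|_{K}$; here I would invoke the minimality of $R_{0}$: since $R$ is monic of degree $\bd$, the hypothesis gives $\|R_{0}\|_{K}\le\|R\|_{K}$, whence $\|R\|_{K}=\|R_{0}\|_{K}=\max(\|R_{0}\|_{K},\|R-R_{0}\|_{K})$.

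The computation is short, and the only genuine subtlety is this second case: the ultrametric inequality by itself is not enough, and the whole content of the lemma lies in using the extremal property of $R_{0}$ to upgrade $\|R\|_{K}\le\|R_{0}\|_{K}$ to an equality. The monicity assumption enters only to make the minimality hypothesis applicable to $R$; the argument never needs to control the degree of $R-R_{0}$. This is precisely the sup-norm analogue of the pointwise identity $|R(x)|=\max(r,|(R-R_{0})(x_{d})|)$ used in the proof of Theorem~\ref{thm:dim1}.
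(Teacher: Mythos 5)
Your proof is correct, and it is essentially the argument the paper leaves implicit: the lemma is stated there without proof (its one-variable counterpart appears inside the proof of Theorem~\ref{thm:dim1} as ``an easy computation''), and the two ingredients you isolate --- the ultrametric inequality for $\|.\|_{K}$, inherited pointwise from the multiplicative seminorms attached to the points of~$K$, and the extremality of~$R_{0}$ among monic polynomials of degree~$\bd$ --- are exactly what that computation rests on. One small streamlining: your case distinction is avoidable, since the minimality $\|R_{0}\|_{K}\le\|R\|_{K}$ already gives $\|R-R_{0}\|_{K}\le\max\bigl(\|R\|_{K},\|R_{0}\|_{K}\bigr)=\|R\|_{K}$, so both terms of the maximum are bounded by $\|R\|_{K}$ and the reverse inequality follows at once, without splitting according to which of $\|R_{0}\|_{K}$ and $\|R-R_{0}\|_{K}$ is larger.
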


\begin{theorem}\label{thm:hd}
Assume that~$k$ is maximally complete. Let~$K$ be a non-empty compact subset of~$\E{n}{k}$. Let~$\bd\in\N^n$. There exists a unique set~$S_{K,\bd}$ in~$\Ss_{\bd}$ containing~$K$ such that   
\[\forall P\in k[\bT]_{\le \bd},\ \|P\|_{S_{K,\bd}}=\|P\|_{K}.\]

Moreover, we have
\[S_{K,\bd} = \bigcap_{P\in k[\bT]_{\le \bd}} \{|P|\le \|P\|_{K}\} \in \Ss_{\bd,R},\]
where $R \subset\R_{+}$ is the image of~$k[\bT]_{\le \bd}$ by the sup-norm~$\|.\|_{K}$.
\end{theorem}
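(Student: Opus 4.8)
The plan is to establish the set

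\[S = \bigcap_{P\in k[\bT]_{\le \bd}} \{|P|\le \|P\|_{K}\}\]

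as the required member of $\Ss_{\bd}$, then verify uniqueness separately. The first task is to show $S \in \Ss_{\bd}$, \emph{i.e.} that the a priori infinite intersection can be cut down to a finite one with radii in the image $R$ of $\|.\|_{K}$. Since $k[\bT]_{\le \bd}$ is a finite-dimensional $k$-vector space, I would pass to the quotient by the subspace of polynomials vanishing identically on $K$ (those with $\|P\|_K=0$); the constraints $\{|P|\le\|P\|_K\}$ are invariant under $k$-scaling and under adding such a polynomial, so only finitely many independent constraints remain up to scaling, and a finite basis of representatives suffices to define $S$ as a member of $\Ss_{\bd,R}$. The inclusion $K\subseteq S$ is immediate, since every $x\in K$ satisfies $|P(x)|\le\|P\|_K$ by definition of the sup-norm.

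The heart of the argument is the norm-matching property $\|P\|_{S}=\|P\|_{K}$ for all $P\in k[\bT]_{\le\bd}$. The inequality $\|P\|_{S}\ge\|P\|_{K}$ is trivial from $K\subseteq S$. For the reverse inequality $\|P\|_{S}\le\|P\|_{K}$, the point is that $S$ is exactly the affinoid domain cut out by the conditions $|P|\le\|P\|_K$, so by the maximum modulus principle on affinoid spaces the sup-norm $\|P\|_S$ is attained on the Shilov boundary and is controlled by these defining inequalities. Concretely, for $P$ with $\|P\|_K=c$, every point of $S$ satisfies $|P|\le c$ by construction, hence $\|P\|_S\le c=\|P\|_K$. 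This gives equality. The role of Lemma~\ref{lem:maxryhd} is to guarantee, via spherical completeness, the existence of a norm-minimizing monic polynomial $R_0$ in each top-degree direction, which is what makes the Shilov-boundary description well-behaved and lets us identify $S$ with a genuine element of $\Ss_{\bd}$ rather than a mere intersection of half-spaces; the lemma's conclusion $\|R\|_K=\max(\|R_0\|_K,\|R-R_0\|_K)$ is the higher-dimensional analogue of the additivity computation used in the proof of Theorem~\ref{thm:dim1}.

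For uniqueness, suppose $S'\in\Ss_{\bd}$ also contains $K$ and satisfies $\|P\|_{S'}=\|P\|_K$ for all $P\in k[\bT]_{\le\bd}$. Then for every $P$ and every $x\in S'$ we have $|P(x)|\le\|P\|_{S'}=\|P\|_K$, so $x\in S$, giving $S'\subseteq S$. Conversely, writing $S'=\bigcap_i\{|P_i|\le r_i\}$ with $P_i\in k[\bT]_{\le\bd}$, the matching of norms forces $r_i\ge\|P_i\|_{S'}=\|P_i\|_K$ on each defining inequality, so every constraint defining $S'$ is implied by the corresponding constraint $\{|P_i|\le\|P_i\|_K\}$ of $S$, whence $S\subseteq S'$. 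This double inclusion yields $S'=S$.

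I expect the main obstacle to be the reduction showing $S$ is genuinely a \emph{finite} intersection, \emph{i.e.} that $S\in\Ss_{\bd}$ rather than just an intersection over the infinite-dimensional index set of all polynomials. The finite-dimensionality of $k[\bT]_{\le\bd}$ is the key resource, but turning it into a finite list of defining inequalities requires care: one must argue that among all the half-spaces $\{|P|\le\|P\|_K\}$, only finitely many are non-redundant, and that the corresponding radii lie in $R$. This is where spherical completeness (through Lemma~\ref{lem:maxryhd}, analogous to the spherical completeness of $k_d$ in dimension~1) does the real work, by ensuring the relevant infima are attained and the Shilov boundaries are controlled by finitely many monic polynomials of degree $\bd$.
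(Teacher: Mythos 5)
Your uniqueness argument is correct and matches what the paper treats as clear, but the existence half has a genuine gap exactly where you predict the difficulty lies. First, a small point: once $S$ is \emph{defined} as the intersection of all the sets $\{|P|\le \|P\|_{K}\}$ for $P\in k[\bT]_{\le\bd}$, the norm-matching $\|P\|_{S}=\|P\|_{K}$ is a tautology (every point of $S$ satisfies $|P|\le\|P\|_{K}$ by definition, and $K\subset S$ gives the other inequality); your appeal to the maximum modulus principle and Shilov boundaries there is unnecessary, and also unavailable, since you have not yet shown $S$ is affinoid. The entire content of the theorem is therefore the finiteness claim $S\in\Ss_{\bd}$, and your proposed mechanism for it fails. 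Quotienting by the polynomials with $\|P\|_{K}=0$ and using scaling invariance leaves constraints indexed by the projectivization of (a quotient of) $k[\bT]_{\le\bd}$, which is infinite, and the constraints are not linear in $P$: the ultrametric inequality only bounds $|P+Q|$ from above, so a finite basis of representatives does not control the constraints attached to linear combinations. Concretely, take $n=1$, $\bd=1$, $K=\{a\}$ with $a\in k$, $|a|=1$: the constraints coming from the basis $\{1,T\}$ cut out the closed unit disc, whereas the full intersection also imposes $|T-a|\le 0$ and is the single point $a$. Which finitely many constraints are non-redundant is precisely what has to be determined, and no amount of linear algebra on $k[\bT]_{\le\bd}$ alone does it.

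You also misattribute the key existence step to Lemma~\ref{lem:maxryhd}: that lemma does not produce the minimizing monic polynomial $R_{0}$; it \emph{assumes} one exists and derives the formula $\|R\|_{K}=\max(\|R_{0}\|_{K},\|R-R_{0}\|_{K})$. The paper's actual argument is an induction on the multidegree $\bd$ in the total order it defines on $\N^n$. At each step one forms the separated completion $E$ of $(k[\bT]_{\le\bd'},\|.\|_{K})$, where $\bd'$ is the predecessor of $\bd$ --- spherically complete because $k$ is maximally complete --- and intersects the nested balls $\overline{D}(j(P),\|\bT^{\bd}-P(\bT)\|_{K})$ to obtain $P_{0}$, hence $R_{0}=\bT^{\bd}-P_{0}$ realizing $r=\inf_{P}\|\bT^{\bd}-P\|_{K}$. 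One then sets $S_{K,\bd}=S_{K,\bd'}\cap\{|R_{0}|\le r\}$: a \emph{single} new inequality per multidegree, so the result is visibly a finite intersection. The lemma, combined with the inductive hypothesis applied to $R-R_{0}$ (of strictly smaller degree), shows this one inequality enforces $\|R\|_{S_{K,\bd}}=\|R\|_{K}$ simultaneously for all monic $R$ of degree $\bd$. This replacement of infinitely many constraints by one best approximation per multidegree is the idea missing from your proposal; without it the claim $S\in\Ss_{\bd}$ remains unproven, even though everything else you wrote is sound.
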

\begin{proof}
The uniqueness and the last statement are clear. Let us prove the existence of~$S_{K,\bd}$ by induction on~$\bd$.  If $\bd=0$, choose $S_{0}=\E{n}{k}$.

Let~$\bd\in \N^n\setminus\{0\}$. Let~$\bd'$ be the predecessor of~$\bd$ and assume we found~$S_{K,\bd'}$ in~$\Ss_{K,\bd'}$ that satisfies the conditions of the statement. Let~$E$ be the separated completion of $k[\bT]_{\le \bd'}$ with respect to the semi-norm~$\|.\|_{K}$. It is a spherically complete $k$-vector space. Let $j : k[\bT]_{\le \bd'} \to E$ be the canonical morphism. 

The family of balls $\big(D_{P} = \overline{D}(j(P(\bT)), \|\bT^{\bd}-P(\bT)\|_{K})\big)_{P\in k[\bT]_{\le \bd'}}$ is nested. Let $j(P_{0}(\bT))$ be an element in the intersection. Let $R_{0}(\bT) =  \bT^{\bd}-P_{0}(\bT)$ and
\[r= \|R_{0}(\bT)\|_{K}= \inf_{P\in k[\bT]_{\le \bd'}} (\|\bT^{\bd}-P(\bT)\|_{K}).\]

Let us now set 
\[S_{K,\bd} = S_{K,\bd'} \cap \{|R_{0}|\le r\}.\] 
This is an element of~$\Ss_{\bd,R}$ that contains~$K$. For any polynomial~$P$ of degree smaller than~$\bd$, we have $\|P\|_{K}=\|P\|_{S_{K, \bd'}}=\|P\|_{S_{K,\bd}}$. Moreover, by lemma~\ref{lem:maxryhd}, for every monic polynomial~$R$ of degree~$\bd$, we have 
\[ \|R\|_{K} = \max(\|R_{0}\|_{K},\|R-R_{0}\|_{K}) = \max(r,\|R-R_{0}\|_{S_{K,\bd}}).\]
Using the fact that $\|R_{0}\|_{S_{K,\bd}}= r$, we deduce that $\|R\|_{K}=\|R\|_{S_{K,\bd}}$.
\end{proof}

\begin{remark}
As we found out after we wrote this paper, our construction is not unrelated to that of~\cite{HLen}, \S~5.1.
\end{remark}

\begin{corollary}\label{cor:approx}
Assume that~$k$ is maximally complete. Let~$K$ be a non-empty compact subset of~$\E{n}{k}$. Let~$\bd\in\N^n$ and $R \subset\R_{+}$ be the image of~$k[\bT]_{\le \bd}$ by the sup-norm~$\|.\|_{K}$. There exists a finite number of points~$\eta_{1},\dots,\eta_{r}$ in~$D_{\bd,R}$ such that  
\[\forall P\in k[\bT]_{\le \bd},\ \|P\|_{K} = \max_{1\le i\le r} (|P(\eta_{i})|).\]
\end{corollary}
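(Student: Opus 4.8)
The plan is to apply Theorem~\ref{thm:hd} to obtain the canonical set $S_{K,\bd}$, and then to extract the desired finite collection of points from its Shilov boundary. The key observation is that $S_{K,\bd}$ is, by the last displayed formula of that theorem, an element of $\Ss_{\bd,R}$, and by construction the sup-norm on $S_{K,\bd}$ reproduces the sup-norm on $K$ for every polynomial of degree at most $\bd$. So it suffices to show that the sup-norm of any $P \in k[\bT]_{\le \bd}$ on a set belonging to $\Ss_{\bd,R}$ is attained as a maximum of the values $|P(\eta_i)|$ over the finitely many points $\eta_i$ of its Shilov boundary.

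First I would recall that $S_{K,\bd}$ is an affinoid domain of $\E{n}{k}$: it is a finite intersection of sets of the form $\{|P_i| \le r_i\}$ with $P_i \in k[\bT]_{\le \bd}$ and $r_i \in R$, hence a (compact) Weierstrass/Laurent-type domain, or at least a compact analytic domain cut out by finitely many inequalities. Such a domain is a $k$-affinoid space, and a fundamental theorem of Berkovich theory (see \cite{rouge}, Chapter~2) asserts that its Shilov boundary is a \emph{finite} set $\{\eta_1,\dots,\eta_r\}$, with the defining property that, for every element $f$ of the affinoid algebra, the sup-norm $\|f\|$ is attained at some point of this boundary: $\|f\|_{S_{K,\bd}} = \max_{1\le i \le r}(|f(\eta_i)|)$. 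Applying this to $f = P$ a polynomial of degree at most $\bd$, and combining with the equality $\|P\|_{K}=\|P\|_{S_{K,\bd}}$ from Theorem~\ref{thm:hd}, yields exactly the claimed formula.

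The last point to check is that these boundary points $\eta_1,\dots,\eta_r$ lie in $D_{\bd,R}$. By definition, $D_{\bd,R}$ is the union of the Shilov boundaries of the elements of $\Ss_{\bd,R}$. Since $S_{K,\bd}$ is itself an element of $\Ss_{\bd,R}$ (again by the explicit description in Theorem~\ref{thm:hd}), its Shilov boundary is by definition contained in $D_{\bd,R}$, so the points $\eta_i$ automatically belong to $D_{\bd,R}$. This closes the argument.

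The main obstacle, and the only genuinely non-formal step, is the invocation of the finiteness of the Shilov boundary of a $k$-affinoid space and the fact that the sup-norm of any function on the affinoid is realized on it. This is a standard but nontrivial input from Berkovich's foundational work; one must make sure that the particular domains $S_{K,\bd}$ qualify as affinoid (finiteness of the defining inequalities is what guarantees this, and the paper works throughout with compact such sets) so that the theorem applies. Everything else — the reduction to $S_{K,\bd}$ and the membership of the $\eta_i$ in $D_{\bd,R}$ — is immediate from the constructions already established.
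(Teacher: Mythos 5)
Your proof is correct and follows exactly the paper's argument: the paper's entire proof consists of taking $\eta_1,\dots,\eta_r$ to be the Shilov boundary points of the $k$-affinoid space $S_{K,\bd}$ from Theorem~\ref{thm:hd}, with the finiteness of the Shilov boundary and the realization of the sup-norm on it used implicitly, just as you spell out. One small caveat: finiteness of the defining inequalities alone does not guarantee compactness (e.g.\ $\{|T_1T_2|\le 1\}$ is not compact), so the affinoid property of $S_{K,\bd}$ really rests on the specific inductive construction, which includes degree-one inequalities bounding each variable --- a point the paper itself also passes over silently.
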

\begin{proof}
Choose for $\eta_{1},\dots,\eta_{r}$ the points of the Shilov boundary of the $k$-affinoid space~$S_{K,\bd}$ of the theorem.
\end{proof}

\begin{remark}
If the compact~$K$ is a singleton~$\{x\}$, the result of theorem~\ref{thm:hd} in dimension~1 seems to be weaker than that of theorem~\ref{thm:dim1}, but it is actually easy to see that they are equivalent. As a matter of fact, the sets~$S_{x,d}$ are finite unions of virtual discs of the form~$\{|P|\le r\}$, where~$P$ is an irreducible factor of a polynomial that appears in the definition of~$S_{d}$. If~$x$ belongs to the disc~$\{|P|\le r\}$, then the point~$\eta_{P,r}$ satisfies the properties of theorem~\ref{thm:dim1}. 
\end{remark}

\begin{remark}
Even when~$K$ is a singleton~$\{x\}$, the generalisation to higher dimension needs to be handled with some care. For example, it is not true that the set~$S_{x,\bd}$ may be written in the form 
\[\{|P_{1}(T_{1})|\le r_{1}\} \cap \{|P_{1}(T_{1},T_{2})|\le r_{2}\} \cap \dots \cap \{|P_{1}(T_{1},\dots,T_{n})|\le r_{n}\}.\] 
Let us give an example. We will assume that the characteristic of~$k$ is not~2. Let $r,s \in [0,1)$ with $s < r^3$. Let us consider the point~$\eta_{r}$ in~$\Ak$ which is the unique point in the Shilov boundary of $\{|T_{1}|\le r\}$. Around this point, the fonction~$1+T_{1}$ has a square root given by the usual development $\sqrt{1+T_{1}} = 1+\frac{1}{2}\, T - \frac{1}{8}\, T^2 +\dots$ Let~$x$ in~$\E{2}{k}$ be the point in the fiber over~$\eta_{r}$ which is the unique point in the Shilov boundary of $\{|T_{2} - \sqrt{1+T_{1}}|\le s\}$. The subset~$S_{x,(2,0)}$ is not the naive
\[\{|T_{1}|\le r\} \cap \{|T_{2} - (1+\frac{1}{2}\, T - \frac{1}{8}\, T^2)|\le s\}\]
but
\[\{|T_{1}|\le r\} \cap \{|T_{2}^2 - (1+T_{1})|\le s\} \cap \{|T_{2}-1|\le s\}.\]
\end{remark}

\begin{remark}
It is not true either that the set~$S_{x,\bd}$ has a unique point in its Shilov boundary in general. Let us consider the point~$\eta_{1}$ in~$\Ak$ (with variable~$T_{1}$) and the point~$x$ in~$\E{3}{k}$ (with variables $T_{1},T_{2},T_{3}$) over it defined by the equations $T_{2}=T_{1}^4$ and $T_{3}=T_{1}^6$. It is possible to check that
\[{\renewcommand{\arraystretch}{1.3}\begin{array}{rcl}
S_{x,(3,0)} &=& \{|T_{1}|\le 1\} \cap \{|T_{2}|\le 1\} \cap \{|T_{3}|\le 1\}\\
&&  \cap\, \{T_{1}^2\, T_{2} - T_{3} = T_{1}^2\, T_{3} - T_{2}^2 = T_{2}^3 - T_{3}^2 = 0\}.
\end{array}}\]
This is a Zariski closed subset of the unit closed ball in~$\E{3}{k}$, which has two irreducible components (defined one by $T_{2}-T_{1}^4=T_{3}-T_{1}^6=0$, the other by $T_{2}=T_{3}=0$), hence two points in its Shilov boundary. We are grateful to N.~D.~Elkies, who kindly communicated this example to us (see~\cite{MOirrcomp}).
\end{remark}

\begin{corollary}
Assume that~$k$ is maximally complete. Let~$\bd\in\N^n$. Let~$\Ps$ be a family of polynomials of degrees at most~$\bd$. Let~$(r_{P})_{P\in\Ps}$ be a family of real numbers. Assume that the subset of~$\E{n}{k}$ defined by
\[K = \bigcap_{P\in\Ps} \{|P|\le r_{P}\}\]
is compact. Then it belongs to~$\Ss_{\bd}$.
\end{corollary}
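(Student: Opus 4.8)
The plan is to reduce the whole statement to Theorem~\ref{thm:hd} applied to the compact set~$K$ itself. First I would dispose of the trivial case: if~$K$ is empty, then $K = \{|1|\le 0\}$ already lies in~$\Ss_{\bd}$, since the constant polynomial~$1$ has degree $0\le\bd$ and $0\in\R_{+}$. So I may assume~$K$ non-empty, which is precisely the hypothesis needed for the sup-norm~$\|.\|_{K}$ and for Theorem~\ref{thm:hd} to apply.

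Next I would invoke Theorem~\ref{thm:hd} with the data~$K$ and~$\bd$. It produces a set $S_{K,\bd}\in\Ss_{\bd}$ — hence a \emph{finite} intersection of inequalities of degree at most~$\bd$ — which contains~$K$ and satisfies $S_{K,\bd} = \bigcap_{P\in k[\bT]_{\le \bd}} \{|P|\le \|P\|_{K}\}$. The corollary will follow at once if I can show $K = S_{K,\bd}$. The inclusion $K\subseteq S_{K,\bd}$ is part of the theorem, so the only thing left is the reverse inclusion $S_{K,\bd}\subseteq K$.

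For that reverse inclusion I would fix a point $x\in S_{K,\bd}$ and a polynomial $P\in\Ps$. Since $\deg P\le\bd$, the membership $x\in S_{K,\bd}$ forces $|P(x)|\le \|P\|_{K}$. On the other hand, $K\subseteq\{|P|\le r_{P}\}$ by the very definition of~$K$, so $\|P\|_{K}=\sup_{y\in K}|P(y)|\le r_{P}$. Combining the two gives $|P(x)|\le \|P\|_{K}\le r_{P}$, that is $x\in\{|P|\le r_{P}\}$. Letting~$P$ range over~$\Ps$ yields $x\in K$, whence $S_{K,\bd}\subseteq K$ and finally $K=S_{K,\bd}\in\Ss_{\bd}$.

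I expect no serious obstacle: essentially all the difficulty has already been absorbed into Theorem~\ref{thm:hd}, whose substance is exactly that the a priori infinite canonical intersection $\bigcap_{P}\{|P|\le\|P\|_{K}\}$ is in fact a finite one. What this corollary adds is only the observation that any compact set carved out by degree-$\le\bd$ inequalities must coincide with that minimal canonical representative, which reduces to the elementary sup-norm comparison above. The single point to watch is that the inequalities defining~$K$ have degree bounded by~$\bd$, so that the estimate $\|P\|_{K}\le r_{P}$ can actually feed into the description of~$S_{K,\bd}$; this is guaranteed by the hypothesis on~$\Ps$.
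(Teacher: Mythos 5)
Your proof is correct and follows essentially the same route as the paper: the paper's one-line argument is precisely that $K = \bigcap_{Q\in k[\bT]_{\le \bd}} \{|Q|\le \|Q\|_{K}\}$ (which is your two-inclusion check, using $\|P\|_{K}\le r_{P}$ for each $P\in\Ps$ of degree at most $\bd$), after which Theorem~\ref{thm:hd} identifies this set with $S_{K,\bd}\in\Ss_{\bd}$. You merely make explicit the ``easy to check'' step and the empty case, both of which are handled exactly as you describe.
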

\begin{proof}
We may assume that~$K$ is non-empty. It is easy to check that
\[K =  \bigcap_{Q\in k[\bT]_{\le \bd}} \{|Q|\le \|Q\|_{K}\}.\]
By theorem~\ref{thm:hd}, the latter set belongs to~$\Ss_{\bd}$.
\end{proof}

\begin{remark}\label{rem:dim2type4}
The result fails if the degrees are not bounded. Assume that~$k$ is not trivially valued. Let~$r>0$. Let~$(\alpha_{n})_{n\ge 0}$ be a sequence of elements of~$k$ such that the sequence of real numbers $(|\alpha_{n}|\, r^n)_{n\ge 0}$ is decreasing and has a positive limit. Let us consider the intersection
\[\{|T_{1}|\le r\} \cap \bigcap_{n\ge 1} \big\{|T_{2} - \sum_{i=0}^n \alpha_{i}\, T_{1}^{i}| \le |\alpha_{n+1}| r^{n+1}\big\}\]
in~$\E{2}{k}$. It is compact but not affinoid. For example, one may check that over~$\eta_{r}$, it contains only one point, which is of type~4 over~$\Hs(\eta_{r})$.
\end{remark}

For~$\bd\in\N^n$, let~$\Ss_{\bd}(k)$ be the family of subsets of~$k^n$ of the form 
\[\bigcap_{1\le i\le p} \{|P_{i}|\le r_{i}\},\] 
with $P_{1},\dots,P_{p} \in k[\bT]_{\le \bd}$ and $r_{1},\dots,r_{p}\in \R_{+}$.

\begin{corollary}
Assume that~$k$ is algebraically closed and maximally complete. Let~$\bd\in\N^n$. Let~$I$ be a linearly ordered set and $(V_{i})_{i\in I}$ be a decreasing family of non-empty elements of~$\Ss_{\bd}(k)$. Assume that there exists a disc $\overline{D}(0,\bs)$ with $\bs \in \R_{+}^n$ that contains all the~$V_{i}$'s. Then the intersection $\bigcap_{i\in I} V_{i}$ is non empty.
\end{corollary}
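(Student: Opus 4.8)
The plan is to transport the problem into the Berkovich space $\E{n}{k}$, where compactness is available, and then descend back to a rational point. For each $i\in I$ and each $P\in k[\bT]_{\le\bd}$, set $\rho_{i}(P)=\sup_{a\in V_{i}}|P(a)|$, which is finite because $V_{i}$ is bounded and non-empty; since $(V_{i})$ is decreasing, $i\mapsto\rho_{i}(P)$ is non-increasing for every fixed~$P$. I would then introduce the analytic companions
\[S_{i}=\bigcap_{P\in k[\bT]_{\le\bd}}\{|P|\le\rho_{i}(P)\}\subseteq\E{n}{k}.\]
The first thing to record is the elementary identity $S_{i}\cap k^{n}=V_{i}$: the inclusion $V_{i}\subseteq S_{i}$ is immediate from the definition of~$\rho_{i}$, while if $V_{i}=\bigcap_{j}\{|P_{j}|\le r_{j}\}$ then $\rho_{i}(P_{j})\le r_{j}$, so any $a\in S_{i}\cap k^{n}$ satisfies $|P_{j}(a)|\le\rho_{i}(P_{j})\le r_{j}$ and hence lies in~$V_{i}$. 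Consequently $\bigcap_{i}V_{i}=\big(\bigcap_{i}S_{i}\big)\cap k^{n}$, and it suffices to produce a rational point in $X:=\bigcap_{i}S_{i}$.

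Next I would use compactness. Because $V_{i}\subseteq\overline{D}(0,\bs)$ forces $\rho_{i}(T_{l})\le s_{l}$, each $S_{i}$ is a closed subset of the compact polydisc $\{|T_{l}|\le s_{l},\ 1\le l\le n\}$, hence compact; the $S_{i}$ are moreover nested and non-empty, so the finite intersection property gives $X\ne\emptyset$. This is the one step that genuinely uses the analytic space rather than $k^{n}$ itself: the rational loci $V_{i}$ need not be compact, but their Berkovich companions are.

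The heart of the matter is to recognise $X$ as an honest affinoid. Writing $\rho(P)=\inf_{i}\rho_{i}(P)$, one has
\[X=\bigcap_{P\in k[\bT]_{\le\bd}}\{|P|\le\rho(P)\},\]
a \emph{compact} intersection of conditions imposed by polynomials of degree at most~$\bd$. By the corollary preceding Remark~\ref{rem:dim2type4} — which rests on Theorem~\ref{thm:hd} and therefore on the maximal completeness of~$k$ — such a compact set already lies in~$\Ss_{\bd}$: the a priori infinite intersection collapses to a finite one, so that $X$ is a $k$-affinoid domain. This is exactly where the degree bound is indispensable, and I expect it to be the main obstacle to formulate cleanly; Remark~\ref{rem:dim2type4} shows that without it the intersection may shrink to a single type-4 point carrying no rational point, so the passage from ``infinite intersection'' to ``affinoid'' is precisely the content that bounded degree plus spherical completeness buys us.

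Finally, since $X$ is a non-empty $k$-affinoid space and $k$ is algebraically closed, $X$ contains a $k$-rational point~$a$ (the residue field at a maximal ideal of the affinoid algebra is a finite, hence trivial, extension of~$k$, so the rigid points of $X$ are non-empty). Then $a\in X\cap k^{n}=\bigcap_{i}V_{i}$, which is the desired conclusion. Everything besides the collapse to an affinoid is either formal or a direct appeal to the machinery already in place: the maximal completeness of~$k$ enters only through that collapse, and the algebraic closedness only in this last descent to a rational point.
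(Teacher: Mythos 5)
Your construction is, up to packaging, the paper's own: the paper likewise replaces each $V_{i}$ by an analytic companion ($W_{i}$, the naive analytification of a chosen presentation of $V_{i}$ intersected with the polydisc), gets non-emptiness of the total intersection from compactness, invokes exactly the corollary preceding remark~\ref{rem:dim2type4} to collapse the infinite intersection into an element of $\Ss_{\bd}$, and extracts a rational point from algebraic closedness. Your variant with $S_{i}=\bigcap_{P\in k[\bT]_{\le\bd}}\{|P|\le\rho_{i}(P)\}$ is a mild improvement in one respect: since the $S_{i}$ are canonical, they are automatically nested, whereas the paper's $W_{i}$ depend on presentations and the finite intersection property has to be checked through the $V_{i}$'s. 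The identity $S_{i}\cap k^{n}=V_{i}$ and the computation $X=\bigcap_{P}\{|P|\le\rho(P)\}$ are correct, so everything up to ``$X\in\Ss_{\bd}$'' is sound.

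The gap is in the final step. First, the statement allows $k$ trivially valued (trivially valued fields are maximally complete), and there your rational-point extraction is simply false: over a trivially valued algebraically closed field, the set $\{|T_{1}|\le 1/2\}\cap\{|T_{1}T_{2}-1|\le 1/2\}\cap\{|T_{2}|\le 2\}$ is a non-empty compact element of $\Ss_{(1,1)}$ (it contains the monomial point with $|T_{1}|=1/2$, $|T_{2}|=2$) whose rational locus is empty, since a $k$-point would need $T_{1}=0$ and $T_{1}T_{2}=1$. This is precisely why the paper splits off the trivially valued case and handles it by noetherianity of the Zariski topology, asserting the $k$-point only under the hypothesis that the valuation is non-trivial. (Your particular $X$ could be rescued in the trivial case, because there $\rho(P)\in\{0\}\cup[1,+\infty)$ and one can argue via the classical Nullstellensatz and the linear ordering of $I$, but you never make that argument.) Second, even in the non-trivially valued case, your justification --- ``the residue field at a maximal ideal of the affinoid algebra is a finite extension of $k$'' --- is the Nullstellensatz for \emph{strictly} affinoid algebras; the radii $\rho(P)$ are arbitrary positive reals, so $X$ need not be strict, and for non-strict affinoid algebras the claim fails (an annulus $k\{r^{-1}T,rT^{-1}\}$ with $r\notin\sqrt{|k^{\times}|}$ is a field). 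Some further argument, using non-triviality of the valuation (density and divisibility of $|k^{\times}|$, or the special Weierstrass form of $X$), is needed; the paper states this fact without proof but does confine it to the non-trivially valued case.
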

\begin{proof}
If~$k$ is trivially valued, it is a result about Zariski closed subsets of~$\A^n_{k}$ and we conclude by the noetherianity of the Zariski topology.

Let us now assume that~$k$ is not trivially valued. Let $\D = \overline{D}(0,\bs) \subset \E{n}{k}$. Let~$i\in I$. There exists an integer~$p_{i}$, polynomials $P_{i,1},\dots,P_{i,p_{i}}$ in $k[\bT]_{\le \bd}$ and real numbers $r_{i,1},\dots,r_{i,p_{i}}$ such that
\[V_{i} = \bigcap_{1\le j\le p_{i}} \big\{x\in k^n\, \big|\, |P_{i,j}|\le r_{i,j}\big\}.\]
Let us set
\[W_{i} = \bigcap_{1\le j\le p_{i}} \big\{x\in \E{n}{k}\, \big|\, |P_{i,j}|\le r_{i,j}\big\} \cap \D.\]

If~$J$ is a finite subset of~$I$, the intersection $\bigcap_{i\in J} W_{i}$ contains $\bigcap_{i\in J} V_{i}$, hence is non-empty. Since~$\D$ is compact, we deduce that $\bigcap_{i\in I} W_{i}$ is non-empty. By the preceeding corollary, it belongs to~$\Ss_{\bd}$. Since~$k$ is algebraically closed and not trivially valued, it has a $k$-point, which necessarily belongs to $\bigcap_{i\in I} V_{i}$.
\end{proof}

\begin{remark}
The result fails if we do not assume that the~$W_{i}$ belong to some disc (in dimension at least~2 and for non-trivially valued fields). Consider for instance $\{|T_{1}-1|\le r\} \cap \bigcap_{n\ge 0} \{|T_{1}T_{2}| \le r^n\}$, for some $r\in (0,1)$.
\end{remark}

\section{Definable types}

In this last part of the paper, we would like to apply some model-theoretic notions in the setting of Berkovich spaces. We refer to~\cite{Marker} for the basic definitions and results of model theory. We will use freely the rather intuitive notions of language, formula (built from the language possibly with variables and quantifiers) and sentence (formula with no free variable).

Since we work with Berkovich spaces, it is natural to consider valued fields $(k,|.|)$. From the model-theoretic point of view, we will use the language~$\Ls_{\div}$, which is the language of rings $(+,-,.,0,1)$ endowed with an extra binary predicate~``$\div$'', with $\div(x,y)$ interpreted as \mbox{$|x|\le |y|$}. We will work with the theory ``ACVF'' of algebraically closed valued fields with non-trivial valuation. One may check that the axioms needed may actually be written down in the language~$\Ls_{\div}$. Beware that there is no restriction on the rank of the valuation. For this reason, we will need to be careful when turning to Berkovich spaces. Remark that there is no way to get around this pitfall since valuations cannot be forced to have rank~1 by first-order formulas.

Let us recall one of the basic results of the theory, which is essentially due to A.~Robinson (see~\cite{complete}).

\begin{theorem}
\begin{enumerate}[\it i)]
\item The theory ACVF eliminates quantifiers: every formula is equivalent to a formula with no quantifiers.
\item Any embedding $K\subset L$ of algebraically closed valued fields with non-trivial valuations is elementary: a sentence with parameters in~$K$ holds in~$K$ if, and only if, it holds in~$L$. (The theory ACVF is said to be model-complete.)
\end{enumerate}
\end{theorem}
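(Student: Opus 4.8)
The plan is to establish part~i) via the standard embedding (back-and-forth) test for quantifier elimination recalled in~\cite{Marker}: the theory ACVF eliminates quantifiers as soon as, for any two models $\mathcal{M}, \mathcal{N} \models \mathrm{ACVF}$ with~$\mathcal{N}$ sufficiently saturated, any common substructure~$A$, and any single element $b \in \mathcal{M}$, the inclusion $A \hookrightarrow \mathcal{N}$ extends to an $\Ls_{\div}$-embedding $A\langle b\rangle \hookrightarrow \mathcal{N}$. Part~ii) will then be a formal consequence of~i): an embedding of $\Ls_{\div}$-structures is an injective ring homomorphism compatible with~$\div$, hence preserves and reflects every quantifier-free formula with parameters in the smaller field; once every formula is equivalent modulo ACVF to a quantifier-free one, every such embedding is automatically elementary.

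First I would clarify the substructures. In the language~$\Ls_{\div}$, a substructure of a model of ACVF is an integral domain carrying the restricted divisibility relation, that is, a subring equipped with a valuation. Since the valuation extends uniquely to the fraction field without changing the quantifier-free type, I may replace~$A$ by a valued subfield~$F$ common to~$\mathcal{M}$ and~$\mathcal{N}$. The test then becomes a purely valuation-theoretic assertion: given $b \in \mathcal{M}$, find $b' \in \mathcal{N}$ such that $v(Q(b)) = v(Q(b'))$ for all $Q \in F[X]$, i.e. such that $F(b)$ and $F(b')$ are isomorphic as valued fields over~$F$.

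The heart of the matter is a case analysis on~$b$ relying on the classical theory of valued-field extensions (Chevalley's extension theorem and Kaplansky's analysis of pseudo-Cauchy sequences). If~$b$ is algebraic over~$F$, its minimal polynomial splits in the algebraically closed field~$\mathcal{N}$, and since any two extensions of the valuation of~$F$ to a normal algebraic extension are conjugate under the corresponding automorphism group, some root of that polynomial in~$\mathcal{N}$ carries exactly the valuation data of~$b$. Iterating, I may assume that~$F$ is relatively algebraically closed, so that the quantifier-free type of a transcendental~$b$ over~$F$ is completely encoded by the function $a \in F \mapsto v(b-a)$. Three situations arise, according to whether passing to~$F(b)$ enlarges the value group, enlarges the residue field, or does neither. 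In the first two cases the divisibility of the value group (a model of DOAG) and the algebraic closedness of the residue field, combined with the saturation of~$\mathcal{N}$, immediately produce a witness~$b'$ of the desired value-transcendental or residue-transcendental type.

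The main obstacle is the remaining transcendental case, that of an \emph{immediate} extension, where~$\{v(b-a) : a\in F\}$ has no greatest element and~$b$ is a pseudo-limit of a pseudo-Cauchy sequence $(a_\nu)_{\nu<\lambda}$ in~$F$ with no limit in~$F$: here neither a new value nor a new residue witnesses the extension, so one must realize the entire cut at once. I would encode it by the partial type
\[
p(X) \;=\; \{\, \mathrm{div}(X - a_\nu,\, X - a_\mu) \wedge \neg\,\mathrm{div}(X - a_\mu,\, X - a_\nu)\ :\ \mu < \nu < \lambda \,\},
\]
observe that any finite fragment of~$p$ is satisfied in~$\mathcal{N}$ by a sufficiently advanced term~$a_\rho$ of the sequence itself (the pseudo-Cauchy property forcing the values $v(a_\rho - a_\mu)$ to increase with~$\mu$), and then invoke the saturation of~$\mathcal{N}$ to realize~$p$ by some $b' \in \mathcal{N}$. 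Kaplansky's theory ensures that such a~$b'$ is transcendental over~$F$ and generates a valued field $F(b')$ isomorphic to $F(b)$ over~$F$, which completes the extension of the embedding and hence the verification of the quantifier-elimination test.
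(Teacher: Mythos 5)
The paper does not actually prove this statement: it is recalled as a classical result, essentially due to A.~Robinson, with a pointer to his \emph{Complete Theories}; the only argument in the paper's vicinity is the observation that the trivially valued case reduces to ACF, where quantifier elimination and model-completeness are standard. Your proposal, by contrast, supplies the standard modern proof, and its outline is correct: the one-element embedding test for quantifier elimination, reduction of an $\Ls_{\div}$-substructure to a valued subfield~$F$ (the divisibility predicate on a domain determines the valuation on its fraction field without changing quantifier-free types), the algebraic case via conjugacy of extensions of a valuation to a normal algebraic extension, and the trichotomy value-transcendental / residue-transcendental / immediate for transcendental~$b$, with the immediate case handled by realizing the pseudo-Cauchy cut in a saturated model and invoking Kaplansky's uniqueness theorem for pseudo-limits of sequences of transcendental type. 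Deriving ii) formally from i) is also the standard route and is sound.

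Two points in your sketch deserve to be made explicit, though neither is a gap in substance. First, the claim that the quantifier-free type of a transcendental~$b$ is encoded by $a\mapsto v(b-a)$, and the claim that your pc-sequence is of \emph{transcendental} type (which is what licenses Kaplansky's theorem that any pseudo-limit $b'$ generates $F(b')\cong F(b)$ over~$F$), both require~$F$ to be algebraically closed, not merely relatively algebraically closed in the abstract; this does follow in your setting because $\mathcal{M}\models\mathrm{ACVF}$ is algebraically closed, so relative algebraic closedness in~$\mathcal{M}$ forces $F=F^{\mathrm{alg}}$, but you should say so, since otherwise a pc-sequence of algebraic type could occur and the argument would need Kaplansky's algebraic-type analysis as well. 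Second, in the value- and residue-transcendental cases, ``saturation immediately produces a witness'' hides that one realizes a cut in the value group (divisible, hence a model of DOAG) or a transcendental residue (over an algebraically closed residue field); the required degree of saturation is $|F|^{+}$ throughout, which is what the embedding test grants. With these caveats spelled out, your argument is a complete and correct proof of a statement the paper only cites.
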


We plan to apply model-theoretic results to Berkovich spaces. In this setting, it is sometimes important to consider trivially valued fields, hence we would rather not exclude them in our model-theoretic arguments. It is of course possible to define the theory of algebraically closed trivially valued fields with the language~$\Ls_{\div}$. Let us remark that, in this case, the binary predicate~$\div$ is redondant since the formula $\div(x,y)$ is equivalent to $x=0 \vee (x\ne 0 \wedge y\ne 0)$. In other words, this theory is equivalent to the theory of algebraically closed fields, which is model-complete and eliminates quantifiers (see~\cite{Marker}, \S 3.2). 

In the sequel, we will allow ourselves to use model-completeness and quantifier elimination for algebraically closed valued fields, regardless of the fact that the valuation is trivial or not. Beware that this forces us to consider only trivially valued extensions of trivially valued fields.

\bigskip

We now introduce one more definition, that of a type. This model-theoretic notion is analogous to the notion of point of a Berkovich space. 

\begin{definition}
Let~$p$ be a set of formulas with parameters in $A\subset k$ and free variables from $\bu=(u_{1},\dots,u_{n})$. We say that~$p$ is a \textbf{type} if $p \cup \textrm{Th}_{A}(k)$ is satisfiable, where $\textrm{Th}_{A}(k)$ denotes the set of sentences with parameters in~$A$ that hold in~$k$. We say that a type~$p$ is \textbf{complete} if, for any formula~$\varphi(\bu)$, either $\varphi(\bu)$ or $\neg\varphi(\bu)$ belongs to~$p$.
\end{definition}

The basic example of type is constructed as follows. Let~$K$ be an algebraically closed extension of~$k$ (with trivial valuation if $k$~has trivial valuation). Let~$\ba\in K^n$. The set of formulas~$\varphi(\bu)$ with parameters in~$A\subset k$ such that $\varphi(\ba)$ holds in~$K$ is a complete type, denoted $\tp^K(\ba/A)$. It is also possible to associate a complete type to any point~$x$ of~$\E{n}{k}$ (either directly or by using the previous construction and considering $\tp^L(|T_{1}(x)|,\dots,|T_{n}(x)|/A)$, where~$L$ is any algebraically closed extension of~$\Hs(x)$). 

The converse statement is true, with some restrictions for the part concerning Berkovich spaces. General model-theoretic arguments (see~\cite{Marker}, corollary~4.1.4) show that, for any complete type~$p$ with parameters in~$A$, there exists an elementary extension~$K$ of~$k$ and a point~$\ba\in K^n$ such that $p=\tp^K(\ba/A)$. But it may well happen that the valuation of~$K$ has a rank greater than~1, which prevents it from defining a point in~$\E{n}{k}$. The generic type of the open unit ball gives such an example, since the valuation group of any of its realizations contains an infinitesimal below~1.

\begin{definition}
Let~$p$ be a type with parameters in $A\subset k$ and free variables from $\bu=(u_{1},\dots,u_{n})$. The type~$p$ is said to be \textbf{definable} over $B\subset k$ if, for every formula $\varphi(\bu,\bv)$ (without parameters), the exists a formula $d_{p}(\varphi)(\bv)$ with parameters in~$B$ such that, for any $\ba \in A^m$, the formula $\varphi(\bu,\ba)$ belongs to~$p$ if, and only if, $d_{p}(\varphi)(\ba)$ holds in~$k$.
\end{definition}

As we said above, points of affine Berkovich spaces give rise to types, but not always definable types. In~$\E{1}{k}$ with~$k$ algebraically closed and $|k|=\R_{+}$, for instance, definable types correspond to points of type~1, 2 or~3, but not~4 (consider the formula $\varphi(u,a,b): |u-a|\le |b|$ ; see~\cite{imaginaries}, lemma~2.3.8, for more on this question). Therefore, if~$k$ is algebraically closed and maximally complete and if $|k|=\R_{+}$, every point in~$\E{1}{k}$ corresponds to a definable type. But even over such a field, points of type~4 may appear again in higher dimensions. Indeed the residue field~$\Hs(x)$ of a point~$x$ of type~2 in~$\Ak$ fails to be maximally complete (in the non-trivially valued case). Hence, the fiber of the projection $\E{2}{k} \to \E{1}{k}$ over the point~$x$, which is isomorphic to~$\E{1}{\Hs(x)}$, contains points of type~4 (see remark~\ref{rem:dim2type4}).

In~\cite{HLen}, lemma~13.1.1, E.~Hrushovski and F.~Loeser explain that, over an algebraically closed and maximally complete field~$k$ with value group~$\R_{+}$, points of~$\E{n}{k}$, for any~$n$, correspond to (stably dominated) definable types. The proof is quite short but it makes use of the relation between types which are orthogonal to the value group and stably dominated types, as well as a criterion for a type to be stably dominated over an algebraically closed and maximally complete field, two difficult results from~\cite{domination}. As the reader may have guessed from the previous discussion, this result came to the author as a true surprise!

In what follows, we will try to explain in geometric terms what being a definable type means and recover E.~Hrushovski and F.~Loeser's result using our work from the previous sections. The rough idea is that points in~$D$ are definable types, which follows easily from quantifier elimination, and that other points are ``approximated'' by those (in the sense of corollary~\ref{cor:approx}). This approximation is good enough for type-definability. Indeed, to check it, we only need to consider one formula at a time and, any formula being finite, it is possible to bound the degree of the polynomials that appear in it.

\medskip

Let us first recall the definition of a semi-algebraic set (see~\cite{salg} d\'efinition~1.2).

\begin{definition}
A subset~$V$ of~$\E{n}{k}$ is called \textbf{semi-algebraic} (resp. \textbf{strictly semi-algebraic}) if it is a finite boolean combination of subsets of the form $\{|P| \le \lambda |Q|\}$ with $P,Q \in k[T]$ and~$\lambda\in \R_{+}$ (resp. $\lambda\in\{0,1\}$).
\end{definition}

\begin{theorem}\label{thm:semialgk}
Assume that~$k$ is maximally complete. Let~$n,m$ be integers. Let~$x$ be a point in~$\E{n}{k}$ such that $|\Hs(x)| = |k|$ and $V$~be a strictly semi-algebraic subset of~$\E{n+m}{k}$. Let $\pi : \E{n+m}{k} \to \E{m}{k}$ be the projection on the last~$m$ coordinates. Then the subset
\[ \{y \in\E{m}{k}(k)\, |\, x\in V\cap\pi^{-1}(y) \}\]
is strictly semi-algebraic.
\end{theorem}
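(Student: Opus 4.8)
The plan is to translate the conclusion into model theory: by quantifier elimination for ACVF, a subset of $k^{m}$ is strictly semi-algebraic if, and only if, it is definable over~$k$ in the language~$\Ls_{\div}$ (the quantifier-free $\Ls_{\div}$-formulas with parameters in~$k$ are exactly the Boolean combinations of the atomic conditions $|P|\le|Q|$ and $P=Q$, which are the strictly semi-algebraic atoms). So it suffices to prove that the set in the statement is definable over~$k$. For $y\in\E{m}{k}(k)=k^{m}$, the fibre $\pi^{-1}(y)$ is canonically isomorphic to~$\E{n}{k}$, carrying~$x$ to a point at which a polynomial $P\in k[T_{1},\dots,T_{n+m}]$ takes the value $P_{y}(x)$, where $P_{y}(\bT)=P(T_{1},\dots,T_{n},y_{1},\dots,y_{m})\in k[\bT]$. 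Writing~$V$ as a finite Boolean combination of sets $\{|P|\le|Q|\}$ and using that the definable subsets of~$k^{m}$ form a Boolean algebra, I reduce to showing, for fixed $P,Q\in k[T_{1},\dots,T_{n+m}]$, that
\[\{y\in k^{m} : |P_{y}(x)|\le|Q_{y}(x)|\}\]
is definable over~$k$. First I fix $\bd\in\N^{n}$ bounding the degree in $(T_{1},\dots,T_{n})$ of $P$ and~$Q$, so that $P_{y},Q_{y}\in k[\bT]_{\le\bd}$ for every~$y$.

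Next I bring in the approximation. By corollary~\ref{cor:approx} applied to $K=\{x\}$ there are finitely many points $\eta_{1},\dots,\eta_{r}$ in~$D_{\bd,R}$, with~$R$ the image of~$k[\bT]_{\le\bd}$ under $|{\cdot}(x)|$, such that $|F(x)|=\max_{1\le i\le r}|F(\eta_{i})|$ for all $F\in k[\bT]_{\le\bd}$. The hypothesis $|\Hs(x)|=|k|$ forces $R\subseteq|k|$, so each~$\eta_{i}$ is the Shilov point of a strictly $k$-affinoid domain of~$\E{n}{k}$ whose defining radii lie in~$|k|$. The key input is that such a point is a \emph{definable type over~$k$}: this follows from quantifier elimination together with the explicit, Gauss-norm-type computation of the generic value $|F_{y}(\eta_{i})|$ of a polynomial on such a ball (a maximum of terms of the shape $|c(y)|\,\rho^{\bullet}$ with radii $\rho\in|k|$, hence expressible through~$\div$ with parameters in~$k$). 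Concretely, for the formula $\varphi(\bu,\bv,w)=\div(P(\bu,\bv),w)$ there is a formula $d_{i}(\bv,w)$ over~$k$ with $\{(y,c)\in k^{m+1}:|P_{y}(\eta_{i})|\le|c|\}=\{(y,c):d_{i}(y,c)\}$. Taking the conjunction over~$i$ and using $|P_{y}(x)|=\max_{i}|P_{y}(\eta_{i})|$, the set $\{(y,c)\in k^{m+1}:|P_{y}(x)|\le|c|\}$ is definable over~$k$, and likewise for~$Q_{y}$.

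Finally I compare the two values. Since $P_{y},Q_{y}$ have coefficients in~$k$, their values at~$x$ lie in $|\Hs(x)|=|k|$; in particular there is $c\in k$ with $|c|=|Q_{y}(x)|$, whence
\[|P_{y}(x)|\le|Q_{y}(x)|\ \Longleftrightarrow\ \exists c\,\big(|Q_{y}(x)|=|c|\ \wedge\ |P_{y}(x)|\le|c|\big).\]
Both conjuncts define, in the variables $(y,c)$, subsets of~$k^{m+1}$ that are definable over~$k$ by the previous step, and the existential quantifier over $c\in k$ is removed by quantifier elimination. Hence $\{y:|P_{y}(x)|\le|Q_{y}(x)|\}$ is definable over~$k$, that is, strictly semi-algebraic, and Boolean-combining over the pieces of~$V$ yields the theorem. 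I expect the main obstacle to be the identification of the points of~$D_{\bd,R}$ as definable types over~$k$ --- the explicit evaluation of polynomials at these generic points --- together with the verification that, thanks to $R\subseteq|k|$, the resulting conditions involve only~$\div$ with parameters in~$k$; this is precisely what keeps the output strictly, and not merely, semi-algebraic, and is where the hypothesis $|\Hs(x)|=|k|$ is used.
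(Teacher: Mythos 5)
Your overall strategy is the paper's: reduce via quantifier elimination to definability over~$k$, bound the degree in~$\bT$ of the polynomials occurring in~$V$, approximate~$x$ by the degree-$\bd$ construction of theorem~\ref{thm:hd}/corollary~\ref{cor:approx}, and use $|\Hs(x)|=|k|$ to keep everything strict. But there is a genuine gap at the step you yourself flag as the key input: the claim that each Shilov point~$\eta_{i}$ of~$S_{x,\bd}$ is a definable type over~$k$ because the generic value $|F(\eta_{i})|$ admits a Gauss-norm-type evaluation $\max |c(\by)|\,\rho^{\bullet}$ with $\rho\in|k|$. Such a monomial formula exists for Gauss points of polydiscs, but the~$\eta_{i}$ here are Shilov boundary points of arbitrary intersections $\bigcap_{j}\{|P_{j}|\le r_{j}\}$ with bounded-degree~$P_{j}$, and no such formula is available in general. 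The paper's own examples show this: the set $S_{x,(2,0)}$ in the square-root example involves the condition $|T_{2}^2-(1+T_{1})|\le s$ and is not polydisc-like, and in the Elkies example the Shilov boundary of $S_{x,(3,0)}$ has \emph{two} points attached to the irreducible components of a Zariski closed set, so $\max_{i}|F(\eta_{i})|=\|F\|_{S}$ cannot be disaggregated into separately computable values at each~$\eta_{i}$. Worse, by remark~\ref{rem:typedef} the definability of the type of an individual point of~$\E{n}{k}$ is essentially the content of the theorem being proved, so assuming it for the~$\eta_{i}$ is close to circular; a correct proof for Shilov points does exist via stable domination, but that is exactly the Hrushovski--Loeser machinery the paper is written to avoid.

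The repair --- and this is what the paper's proof does --- is never to isolate the individual~$\eta_{i}$: only the sup-norm on $S=S_{x,\bd}$ matters, since $|F(x)|=\|F\|_{S}$ for all $F\in k[\bT]_{\le\bd}$, and sup-norm comparisons are directly first-order over the definable set~$S$. Concretely, the paper chooses an algebraically closed extension~$K/k$ (for instance one containing an algebraic closure of~$\Hs(x)$, with~$\ba$ the image of~$\bT$) in which the sup-norms over~$S$ of all relevant polynomials are attained at $K$-points, and then observes that $|Q(x,\by)|\le|R(x,\by)|$ holds if, and only if, $\forall\alphab\in S(K),\ \exists\betab\in S(K),\ |Q(\alphab,\by)|\le|R(\betab,\by)|$; quantifier elimination then descends this to a quantifier-free formula over~$k$, which is strict because the radii defining~$S$ lie in $|\Hs(x)|=|k|$ and may be scaled to~$\{0,1\}$. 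Note that this also removes the need for your auxiliary variable~$c$: the hypothesis $|\Hs(x)|=|k|$ is used only to make~$S$ strictly definable, not to produce a witness for $|Q_{y}(x)|$. Your reduction to atomic conditions and your accounting of where strictness comes from are otherwise sound; the argument fails only at the unproven definability of the individual Shilov points.
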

\begin{proof}
Let us denote $(\bT,\bT')$ with~$\bT$ of length~$n$ and~$\bT'$ of length~$m$ the variables on~$\E{n+m}{k}$. The formula defining~$V$ may be constructed using conjonctions, disjonctions and negations of atomic formulas of the form $|Q(\bT,\bT')| \le |R(\bT,\bT')|$. Let~$\bd$ be the maximum of the degrees in~$\bT$ of the polynomials appearing in the formula. By theorem~\ref{thm:hd}, there exists polynomials $P_{1},\dots,P_{t} \in k[\bT]$ and real numbers $r_{1},\dots,r_{t} \in |\Hs(x)|=|k|$ such that, if we denote
\[ S = \bigcap_{1\le i\le t} \{|P_{i}(\bT)|\le r_{i}\},\]
we have
\[\forall P\in k[\bT]_{\le \bd},\ \|P\|_{S}= |P(x)|.\]
Remark that we may assume that all the~$r_{i}$ are~$0$ or~$1$. Let~$K/k$ be an extension of valued fields with~$K$ algebraically closed (and trivially valued in case~$k$ is) such that
\[\forall P\in k[\bT], \exists \ba\in K^n, \|P\|_{S} = |P(\ba)|.\]
(One may for example choose an algebraic closure of~$\Hs(x)$ and let~$\ba$ be the image of~$\bT$ by the evaluation morphism $k[\bT]\to \Hs(x) \hookrightarrow K$.)

Let~$\by \in k^m$. It is easy to check that we have $|Q(x,\by)| \le |R(x,\by)|$ if, and only if,
\[ \forall \alphab\in S(K), \exists \betab\in S(K), |Q(\alphab,\by)|  \le |R(\betab,\by)|.\] 
Since~$S$ is definable over~$k$, this proves that the subset of the statement is definable over~$k$, hence strictly semi-algebraic by quantifier elimination.
\end{proof}

\begin{remark}\label{rem:typedef}
When~$k$ is algebraically closed, the statement exactly means that the type~$p_{x}$ corresponding to the point~$x$ is definable (over~$k$). Let~$\varphi(\bu,\bv)$ be a formula with~$\bu$ and~$\bv$ of lengths~$n$ and~$m$. We want to find a formula~$d_{x}(\varphi)(\bv)$ such that, for any $\by\in k^m$, $\varphi(\bu,\by)$ belongs to~$p_{x}$ if, and only if, $d_{x}(\varphi)(\by)$ holds in~$k$. By quantifier elimination, we may assume that~$\varphi$ is a quantifier-free formula. In this case, it defines a strictly semi-algebraic subset~$V$ of~$\E{m+n}{k}$. We now are in the situation of the theorem and any formula defining \mbox{$\{\by \in k^m\, |\, x\in V\cap\pi^{-1}(\by)\}$} will do.
\end{remark}

The fact that a point over~$k$ corresponds to a definable type will enable us to extend it canonically over extensions~$K/k$.

\begin{corollary}\label{cor:extensionpoint}
Assume that~$k$ is maximally complete and algebraically closed. Let~$n$ be an integer and~$x$ be a point in~$\E{n}{k}$ such that $|\Hs(x)| = |k|$. Let $(S_{\bd})_{\bd\in\N^n}$ be the family of subsets associated to~$x$ by theorem~\ref{thm:hd}. 

Let~$K/k$ be an extension of complete valued fields. If~$k$ is trivially valued, assume that $K$~is trivially valued too. Let $\pi : \E{n}{K} \to \E{n}{k}$ be the natural morphism. There exists a unique point $\sigma_{K}(x) \in \E{n}{K}$ such that 
\[\forall\bd\in\N^n, \forall P\in K[\bT]_{\le \bd},\ \|P\|_{\pi^{-1}(S_{\bd})}= |P(\sigma_{K}(x))|.\]

Moreover, the point~$\sigma_{K}(x)$ may also be characterized by the following property:
\[\forall P\in K[\bT],\ |P(\sigma_{K}(x))| = \|P\|_{\pi^{-1}(x)}.\]
\end{corollary}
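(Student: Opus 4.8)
The plan is to construct the point $\sigma_K(x)$ directly from the compact sets $\pi^{-1}(x)$, and then verify that it has the two stated properties. First I would set $K' = \widehat{\bar{K}}$, a maximally complete algebraically closed extension of $K$ (which exists since we may complete an algebraic closure; in the trivially valued case we take the trivially valued algebraic closure). The fiber $\pi^{-1}(x)$ is a non-empty compact subset of $\E{n}{K}$, so for each $\bd\in\N^n$ we may apply theorem~\ref{thm:hd} to the compact $K_{\bd} := \pi^{-1}(x) \subset \E{n}{K}$ and obtain a canonical element $S_{K,\bd} = \pi^{-1}(x)_{\bd} \in \Ss_{\bd}(K)$ satisfying $\|P\|_{S_{K,\bd}} = \|P\|_{\pi^{-1}(x)}$ for every $P\in K[\bT]_{\le\bd}$. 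These sets are nested as $\bd$ increases (since a larger degree constraint refines the smaller one, exactly as in the remark following theorem~\ref{thm:dim1}), so by the corollary to theorem~\ref{thm:dim1} applied in each variable — or more directly by compactness of $\pi^{-1}(x)$ together with the finite-intersection property of the $S_{K,\bd}$ — their intersection $\bigcap_{\bd} S_{K,\bd}$ is non-empty, and the main point to check is that it is a single point. Uniqueness here is the same separation argument as remark~\ref{rem:unique}: two distinct points of $\E{n}{K}$ are separated by the absolute value of some polynomial of some finite degree $\bd$, and that polynomial already distinguishes them inside $S_{K,\bd}$.

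Once $\sigma_K(x)$ is defined as this unique intersection point, the first displayed property is essentially a restatement: I would verify that $\|P\|_{\pi^{-1}(S_{\bd})} = \|P\|_{\pi^{-1}(x)}$ for $P\in K[\bT]_{\le\bd}$. The inclusion $\pi^{-1}(x) \subset \pi^{-1}(S_{\bd})$ is automatic since $x\in S_{\bd}$, giving one inequality. For the reverse, the key observation is that because $x$ satisfies $|\Hs(x)| = |k|$ and $k$ is algebraically closed and maximally complete, the approximating set $S_{\bd}\subset\E{n}{k}$ computes $\|\cdot\|$ of every degree-$\bd$ polynomial with $k$-coefficients exactly as $x$ does; pulling back along $\pi$ and using that $\sigma_K(x)$ realizes the sup-norm over $\pi^{-1}(x)$ on $K[\bT]_{\le\bd}$ forces equality of the two sup-norms, and hence $|P(\sigma_K(x))| = \|P\|_{\pi^{-1}(S_{\bd})}$.

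The second characterization — that $|P(\sigma_K(x))| = \|P\|_{\pi^{-1}(x)}$ for \emph{all} $P\in K[\bT]$, with no degree restriction — follows by letting $\bd\to\infty$: any individual polynomial $P$ has some finite degree $\bd$, so the first property specialized to that $\bd$ already gives $|P(\sigma_K(x))| = \|P\|_{S_{K,\bd}} = \|P\|_{\pi^{-1}(x)}$, where the last equality is the defining property of $S_{K,\bd}$ from theorem~\ref{thm:hd}. Conversely, any point satisfying this second property lies in every $S_{K,\bd}$ and hence equals $\sigma_K(x)$ by the uniqueness established above, which simultaneously proves the uniqueness clause for the first characterization.

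I expect the main obstacle to be the non-emptiness of $\bigcap_{\bd} S_{K,\bd}$, since $\N^n$ is directed but not finite and $K$ need not itself be spherically complete. The clean way around this is compactness: each $S_{K,\bd}$ contains the compact $\pi^{-1}(x)$ and the intersection of the $S_{K,\bd}$ with $\pi^{-1}(x)$ is a nested family of non-empty compacts, so its total intersection is non-empty by the finite-intersection property, and one checks that this intersection coincides with $\bigcap_{\bd}S_{K,\bd}$ precisely because each $S_{K,\bd}$ already contains $\pi^{-1}(x)$. The subtlety worth stating carefully is why the sup-norm over $\pi^{-1}(x)$ of a polynomial in $K[\bT]_{\le\bd}$ is \emph{attained} at a single point rather than merely approached — this is exactly where theorem~\ref{thm:hd} and the Shilov-boundary finiteness of corollary~\ref{cor:approx} do the work, reducing the fiber to a finite union of nice pieces on which the approximation is exact.
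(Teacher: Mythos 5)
There is a genuine gap here --- two, in fact. First, your construction applies theorem~\ref{thm:hd} over the field~$K$, but that theorem requires the base field to be maximally complete, and $K$ is only assumed to be a complete valued extension of~$k$. The fix you propose, $K'=\hat{\bar{K}}$, does not work: the completion of an algebraic closure is in general \emph{not} maximally complete (the standard example being the completion of an algebraic closure of~$\Q_{p}$, which is algebraically closed but not spherically complete; the paper itself uses this failure, via $\Hs(x)$ for $x$ of type~2, to explain why points of type~4 reappear in higher dimension). A maximal immediate extension of~$\bar{K}$ would be maximally complete, but then you would have to descend the resulting point from $\E{n}{K'}$ to $\E{n}{K}$ and re-verify the sup-norm identities after descent, none of which is addressed --- and in any case you never use $K'$ again: all your subsequent steps take place over~$K$ itself, where theorem~\ref{thm:hd} is unavailable.

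Second, and more fatally, the set $\bigcap_{\bd} S_{K,\bd}$ is not a single point. By theorem~\ref{thm:hd} each $S_{K,\bd}$ \emph{contains} the compact it approximates, so $\bigcap_{\bd} S_{K,\bd} \supseteq \pi^{-1}(x)$, and the fiber $\pi^{-1}(x)$ is in general a large space. Your uniqueness argument misreads remark~\ref{rem:unique}: that remark separates two distinct points \emph{of~$D_{d}$}, i.e.\ two Shilov points $\eta_{P,r}$, by a polynomial of degree at most~$d$; membership of a point~$z$ in $S_{K,\bd}$ only yields the upper bound $|P(z)| \le \|P\|_{\pi^{-1}(x)}$ and in no way determines $|P(z)|$. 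What the corollary actually asserts is the existence of a \emph{single} point at which every sup-norm $\|P\|_{\pi^{-1}(x)}$, $P\in K[\bT]$, is simultaneously attained --- equivalently, that $P\mapsto \|P\|_{\pi^{-1}(x)}$ is multiplicative and hence defines a point of~$\E{n}{K}$. Nothing in your proposal establishes this: corollary~\ref{cor:approx} produces finitely many Shilov points for each~$\bd$, but there may genuinely be several, with different value profiles (see the Elkies example following theorem~\ref{thm:hd}), so no canonical choice is available by these means. This is exactly where the paper's machinery enters: after reducing to $K$ algebraically closed (hence an elementary extension of~$k$ by model completeness of ACVF), the point~$x$ gives a \emph{definable} type~$p_{x}$ by remark~\ref{rem:typedef} (which rests on theorem~\ref{thm:semialgk}); one sets $q = p_{x}|K$ via the defining formulas $d_{x}(\varphi)$, uses the hypothesis $|\Hs(x)|=|k|$ to guarantee that $q$ corresponds to an actual point of~$\E{n}{K}$ (no jump in the value group), and transfers the first-order statement ``the sup-norm on~$S_{\bd}$ of every polynomial of degree at most~$\bd$ is realized at~$\bu$'' from $k$ to~$K$ by elementarity. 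Your phrase ``pulling back along~$\pi$ \dots forces equality of the two sup-norms'' is precisely this transfer, and it is not automatic: it is the content of the corollary, not a consequence of the inclusion $\pi^{-1}(x)\subset\pi^{-1}(S_{\bd})$.
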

\begin{proof}
We may assume that~$K$ is algebraically closed. Then it is an elementary extension of~$k$. The point~$x$ corresponds to a type~$p_{x}$ over~$k$, which is definable by remark~\ref{rem:typedef}.  We define a type~$q=p_{x}|K$ over~$K$ using the following recipe (and the notations of the remark). For any~$\bv \in K^m$, the formula $\varphi(\bu,\bv)$ belongs to~$q$ exactly when $d_{x}(\varphi)(\bv)$ holds in~$K$. To check that this indeed defines a type over~$K$, we only need to check that finite subsets of formulas of~$q$ are consistent with the theory of~$K$. Let $\varphi_{1}(\bu,\bv_{1}), \dots, \varphi_{r}(\bu,\bv_{r}) \in q$. By definition of the~$d_{x}(\varphi)$'s and the fact that~$p_{x}$ is a type, the sentence
\[\forall w_{1},\dots,w_{r}, \bigwedge_{1\le i\le r} d_{x}(\varphi_{i})(w_{i}) \to \exists \bu, \bigwedge_{1\le i\le r} \varphi_{i}(\bu,w_{i})\]
holds in an elementary extension of~$k$, hence in~$k$ and~$K$.

Since~$|\Hs(x)|=|k|$, for every~$\bd\in\N^n$, the sentence~$\psi_{\bd}(\bu)$ defined by 
\[\forall \textrm{ polynomial } P \textrm{ of degree } \bd, \exists v, |P(\bu)|=|v|\]
belongs to~$p_{x}$. Hence it also belongs to~$q$, which means that~$q$ corresponds to a point of~$\E{n}{K}$. We will denote it~$\sigma_{K}(x)$.

Let~$\bd\in \N^n$. There exists polynomials $P_{1},\dots,P_{t} \in k[\bT]_{\bd}$ and real numbers $r_{1},\dots,r_{t} \in \{0,1\}$ such that
\[ S_{\bd} = \bigcap_{1\le i\le t} \{|P_{i}|\le r_{i}\}.\]

Let us consider a formula~$\varphi(\bu,\bv)$ expressing the fact that the sup-norm on~$S_{\bd}$ of a polynomial~$P$ of degree at most~$\bd$ is realized at the point we consider. More precisely, the formula~$\varphi(\bu,\bv)$ looks like
\[ \begin{array}{l}
\forall \textrm{ polynomial } P \textrm{ of degree at most } \bd,\\
\big(\forall \ba, |Q_{1}(\ba)| \le r_{1} \wedge \dots \wedge  |Q_{t}(\ba)| \le r_{t} \to |P(\ba)| \le |P(\bu)| \big)\\
\wedge \big( \exists \ba,  |Q_{1}(\ba)| \le r_{1} \wedge \dots \wedge  |Q_{t}(\ba)| \le r_{t} \wedge |P(\ba)| = |P(\bu)| \big),
\end{array}\]
where~$\bv$ encodes the coefficients of the~$Q_{i}$'s. 

When~$\bv$ is specialized to the tuple~$\bv_{0}$ that corresponds to the coefficients of the~$P_{i}$'s, the formula $\varphi(\bu,\bv_{0})$ belongs to the type~$p_{x}$, so $d_{\varphi}(\bv_{0})$ holds in~$k$, hence in~$K$. By definition, $\varphi(\bu,\bv_{0})$ then belongs to the type $q=p_{x}|K$, which exactly means that, for any polynomial~$P$ of degree at most~$\bd$, $|P(\sigma_{K}(x))|$ is equal to the sup-norm of~$P$ on~$\pi^{-1}(S_{\bd})$ (which is defined by the same inequalities as~$S_{\bd}$). We have now proved the existence of a point~$x_{K}$ satisfying the required conditions. Its uniqueness is clear, as well as the last statement.
\end{proof}

\begin{remark}
In~\cite{rouge}, \S~5.2, V.~Berkovich introduced a notion of ``peaked point'' for points that may be canonically lifted over extensions of the base field. (The definition is actually given in terms of norms and is quite close to the last condition of corollary~\ref{cor:extensionpoint}.) We studied this notion further in~\cite{Angieen}, under the name of ``universal point'', and proved, in particular, that any point over an algebraically closed valued field is universal (see corollaire~3.14). The interested reader will find properties of the map~$\sigma_{K}$ as well as applications of the notion in those two references.
\end{remark}

\begin{theorem}
Assume that~$k$ is maximally complete, algebraically closed and non-trivially valued. Let $n,m$ be integers. Let~$x$ be a point in~$\E{n}{k}$ such that $|\Hs(x)| = |k|$ and $V$~be a (resp. strictly) semi-algebraic subset of~$\E{n+m}{k}$. Let $\pi : \E{n+m}{k} \to \E{m}{k}$ be the projection on the last~$m$ coordinates. Then the subset
\[\{y\in\E{m}{k}\, |\, \sigma_{\Hs(y)}(x)\in V\cap\pi^{-1}(y)\}\]
is (resp. strictly) semi-algebraic.
\end{theorem}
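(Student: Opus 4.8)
The plan is to reduce the statement to the definability of the type~$p_{x}$ attached to~$x$ (remark~\ref{rem:typedef}) together with the identification, furnished by corollary~\ref{cor:extensionpoint}, of~$\sigma_{\Hs(y)}(x)$ with the point attached to the extended type $p_{x}|\Hs(y)$. First I would treat the strictly semi-algebraic case. Here~$V$ is cut out by a quantifier-free $\Ls_{\div}$-formula $\varphi(\bu,\bv)$ with parameters in~$k$, where~$\bu$ of length~$n$ encodes the first block of coordinates and~$\bv$ of length~$m$ the last block, so that $\pi$ forgets~$\bu$. Since~$p_{x}$ is definable over~$k$, this~$\varphi$ gives a formula $d_{x}(\varphi)(\bv)$ over~$k$, which by quantifier elimination I may take quantifier-free. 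The whole point will be to show that the locus of~$y$ in the statement coincides with the semi-algebraic subset of~$\E{m}{k}$ obtained by reading~$d_{x}(\varphi)$ at~$y$.

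The key step is a translation of the geometric membership into type membership. Fix $y\in\E{m}{k}$ and apply corollary~\ref{cor:extensionpoint} with $K=\Hs(y)$ (legitimate since $|\Hs(x)|=|k|$ and, $k$ being non-trivially valued, no triviality condition intervenes): the point~$\sigma_{\Hs(y)}(x)$ is the one attached to the type $q=p_{x}|\Hs(y)$, which by its very construction satisfies $\varphi(\bu,\bv)\in q$ if and only if $d_{x}(\varphi)(\bv)$ holds in~$\Hs(y)$. Let $\by=(T_{1}(y),\dots,T_{m}(y))\in\Hs(y)^m$ be the tautological tuple, so that the seminorm at~$y$ of any polynomial is its evaluation at~$\by$. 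Viewing~$\sigma_{\Hs(y)}(x)$ as a point of the fiber $\pi^{-1}(y)\cong\E{n}{\Hs(y)}$, I would check that $\sigma_{\Hs(y)}(x)\in V\cap\pi^{-1}(y)$ is equivalent to $\varphi(\bu,\by)\in q$: the value at this fiber point of a polynomial $F(\bT,\bT')$ is exactly the~$\Hs(y)$-seminorm of $F(\bT,\by)$, so each atomic inequality $|F_{1}|\le|F_{2}|$ defining~$V$, read at the fiber point, is precisely the atomic formula of~$q$ specialized at $\bv=\by$, and the boolean combination passes through because~$q$ is a complete type. Chaining the two equivalences yields: $\sigma_{\Hs(y)}(x)\in V\cap\pi^{-1}(y)$ holds if and only if $d_{x}(\varphi)(\by)$ holds in~$\Hs(y)$.

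It then remains to recognize the last condition as a semi-algebraic constraint on~$y$. As $d_{x}(\varphi)$ is a quantifier-free $\Ls_{\div}$-formula over~$k$, it is a boolean combination of atomic formulas $|P(\bv)|\le|Q(\bv)|$ and $P(\bv)=0$ with $P,Q\in k[\bT']$; evaluating each at the tautological tuple~$\by$ turns it into the seminorm condition $|P(y)|\le|Q(y)|$, respectively $P(y)=0$, that is, into membership of~$y$ in a strictly semi-algebraic subset of~$\E{m}{k}$. Hence the set $\{y\,|\,d_{x}(\varphi)(\by)\text{ holds in }\Hs(y)\}$ is strictly semi-algebraic, and by the previous paragraph it is exactly the set of the statement. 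This settles the strict case.

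For the general semi-algebraic case the only new feature is the finitely many real parameters~$\lambda$ appearing in the conditions $\{|P|\le\lambda|Q|\}$ defining~$V$. I would carry these values along as parameters in the value-group sort, enlarging the language so that quantifier elimination and the definability of~$p_{x}$ continue to apply; the definition $d_{x}(\varphi)$ is then a boolean combination of atomic conditions of the form $|P|\le\lambda|Q|$, whose evaluation at~$\by$ is general semi-algebraic, giving the (resp.\ strictly) statement uniformly. The main obstacle I anticipate is the careful matching of the second paragraph, namely verifying, uniformly in~$y$, that evaluating~$V$ at the Berkovich point~$\sigma_{\Hs(y)}(x)$ lying in the fiber is literally the same as testing membership of~$\varphi(\bu,\by)$ in the extended type $p_{x}|\Hs(y)$, together with the bookkeeping required to handle the real parameters~$\lambda$ without destroying definability over~$k$.
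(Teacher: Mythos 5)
Your treatment of the strictly semi-algebraic case is correct and is essentially the paper's argument in different clothing: the paper proves it by rerunning the proof of theorem~\ref{thm:semialgk} verbatim, noting that corollary~\ref{cor:extensionpoint} lets the same sets~$S_{\bd}$ define $\sigma_{\Hs(y)}(x)$ in every fiber, and remark~\ref{rem:typedef} already records that this is the same thing as definability of~$p_{x}$; your chain $\sigma_{\Hs(y)}(x)\in V\cap\pi^{-1}(y) \Leftrightarrow \varphi(\bu,\by)\in p_{x}|\Hs(y) \Leftrightarrow d_{x}(\varphi)(\by)$ holds in~$\Hs(y)$, followed by evaluation of a quantifier-free $d_{x}(\varphi)$ at the tautological tuple, is a faithful repackaging of that.

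The general case, however, has a genuine gap, and it sits exactly where you flagged your ``main obstacle''. The real constants~$\lambda_{i}$ in the conditions $\{|P_{i}|\le\lambda_{i}|Q_{i}|\}$ need not belong to $|k^{\times}|$, so these conditions are not $\Ls_{\div}$-formulas with parameters in~$k$, and ``carrying them along as parameters in the value-group sort'' does not make sense as stated: $\Ls_{\div}$ is one-sorted, and more importantly there is no expansion of the structure~$k$ itself in which such constants are interpreted --- one would have to pass to a valued field extension whose value group contains the~$\lambda_{i}$, which reintroduces the original problem. Moreover, the definability of~$p_{x}$ that your argument leans on is only established (via theorem~\ref{thm:semialgk}) for quantifier-free $\Ls_{\div}$-formulas over~$k$; neither that definability nor quantifier elimination is available off the shelf in your enlarged language, so the final step of your last paragraph does not close. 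The paper circumvents all of this geometrically, by the trick of~\cite{salg}, proposition~1.6: replace each~$\lambda_{i}$ by a fresh variable~$S_{i}$, obtaining a \emph{strictly} semi-algebraic subset~$W$ of~$\E{n+m+p}{k}$; apply the strict case to get a strictly semi-algebraic $W'\subset\E{m+p}{k}$; then specialize the~$S_{i}$ back to the~$\lambda_{i}$ not by a field point but by the Berkovich point~$y_{\lambdab}$ whose seminorm sends $\sum_{\bu} p_{j}(\bT')\bS^{\bu}$ to $\max_{\bu}(|p_{j}(y)|\lambdab^{\bu})$. The set of the statement is then $\{y\,|\,y_{\lambdab}\in W'\}$, and each inequality $|A(y_{\lambdab})|\le|B(y_{\lambdab})|$ defining it unwinds to $\max_{\bu}(|a(y)|\lambdab^{\bu})\le\max_{\bv}(|b(y)|\lambdab^{\bv})$, a boolean combination of semi-algebraic conditions in~$y$. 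This specialization-at-a-generic-point device is the missing idea; without it, or some substitute of comparable substance, your reduction of the general case to the strict case is not justified.
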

\begin{proof}
When~$V$ is strictly semi-algebraic, the proof goes exactly like the proof of theorem~\ref{thm:semialgk}, because corollary~\ref{cor:extensionpoint} enables us to use the same~$S_{\bd}$'s to define the point~$\sigma_{\Hs(y)}(x)$ in each fiber~$\pi^{-1}(y)$.

To handle the general case, we now use the same trick as in the proof of~\cite{salg}, proposition~1.6. We may assume that~$V$ is defined by a formula obtained by conjonctions, disjonctions and negations of atomic formulas of the form $|P_{i}(\bT,\bT')| \le \lambda_{i} |Q_{i}(\bT,\bT')|$ for $i\in\cn{1}{p}$. Let us now consider the space~$\E{n+m+p}{k}$ and denote $S_{1},\dots,S_{p}$ the $p$~last variables. Define a stricly semi-algebraic subset~$W$ of~$\E{n+m+p}{k}$ by replacing, for $i\in\cn{1}{p}$, the $i^\textrm{th}$~atomic formula by $|P_{i}(\bT,\bT')| \le |S_{i}\, Q_{i}(\bT,\bT')|$.

Let us also denote $\pi : \E{n+m+p}{k} \to \E{m+p}{k}$ the projection on the last~$m+p$ coordinates. By the strictly algebraic case, the set $W' = \{z\in\E{m+p}{k}\, |\, \sigma_{\Hs(z)}(x) \in W\cap\pi^{-1}(z)\}$ is strictly semi-algebraic. Let $y\in\E{m}{k}$. Let~$y_{\lambdab}$ be the point of~$\E{m+p}{k}$ associated to the semi-norm
\[P(\bT',\bS) = \sum_{\bu\ge 0} p_{j}(\bT') \bS^\bu \mapsto \max_{\bu\ge 0}(|p_{j}(y)| \lambdab^\bu).\]
The subset $W\cap\pi^{-1}(y_{\lambdab})$ is a semi-algebraic subset of~$\E{n}{\Hs(y_{\lambdab})}$ defined by a boolean combination of the formulas $|P_{i}(\bT,y)| \le \lambda_{i}\, |Q_{i}(\bT,y)|$ for $i\in\cn{1}{p}$. It contains the point~$\sigma_{\Hs(y_{\lambdab})}(x)$ exactly when the subset $V\cap\pi^{-1}(y)$ of~$\E{n}{\Hs(y)}$ contains the point~$\sigma_{\Hs(y)}(x)$. So the set~$V'$ of the statement is equal to $\{y\in \E{m}{k}\, |\, y_{\lambdab} \in W'\}$.

If $|A(\bT',\bS)| \le |B(\bT',\bS)|$, with $A = \sum_{\bu\ge 0} a(\bT') \bS^\bu$ and $B = \sum_{\bv\ge 0} b(\bT') \bS^\bv$ (each of the sum actually containing a finite number of non-zero terms), is one of the inequations defining~$W'$, we see that $|A(y_{\lambdab})| \le |B(y_{\lambdab})|$ if, and only if, $\max_{\bu\ge 0} (|a(y)|\lambdab^\bu) \le \max_{\bv\ge 0} (|b(y)|\lambdab^\bv)$. This proves that~$V'$ is semi-algebraic. 
\end{proof}

\nocite{}
\bibliographystyle{alpha}
\bibliography{biblio}

\end{document}